\numberwithin{equation}{section}
\newtheorem{theorem}{Theorem}[section]
\newtheorem{lemma}[theorem]{Lemma}
\newtheorem{proposition}[theorem]{Proposition}
\theoremstyle{definition}
\newtheorem{remark}[theorem]{Remark}
\newtheorem{definition}[theorem]{Definition}
\newtheorem{example}[theorem]{Example}
\theoremstyle{remark}
\title{Predual of weak Orlicz spaces}
\author{Naoya Hatano, Ryota Kawasumi, and Takahiro Ono}
\address[Naoya Hatano]{Department of Mathematics, Chuo University, 1-13-27, Kasuga, Bunkyo-ku, Tokyo 112-8551, Japan,}
\address[Ryota Kawasumi]{Minohara 1-6-3 (B-2), Misawa, Aomori, 033-0033}
\address[Takahiro Ono]{Department of Mathematics, Chuo University, 1-13-27, Kasuga, Bunkyo-ku, Tokyo 112-8551, Japan}
\email[Naoya Hatano]{n.hatano.chuo@gmail.com}
\email[Ryota Kawasumi]{rykawasumi@gmail.com}
\email[Takahiro Ono]{t.ono.tmu@gmail.com}
\begin{document}

\maketitle

\begin{abstract}
In this paper, we consider the  predual spaces of weak Orlicz spaces.
As an application, we provide the Fefferman-Stein vector-valued maximal inequality for the weak Orlicz spaces.
In order to prove this statement, we introduced the Orlicz-Lorentz spaces, and showed the boundedness of the Hardy-Littlewood maximal operator on these spaces.
\end{abstract}

{\bf Keywords}
weak Orlicz spaces,
Orlicz-Lorentz spaces,
predual spaces,
Hardy-Littlewood maximal operator.

{\bf Mathematics Subject Classifications (2010)} 
Primary 42B35; Secondary 42B25

\section{Introduction}\label{s1}

The purpose of this paper is to give the predual spaces of weak Orlicz spaces.
Moreover, as an application, we provide the Fefferman-Stein vector-valued maximal inequality for the weak Orlicz spaces.

For a function $\Phi:[0,\infty]\to[0,\infty]$, let
\begin{equation*}
a(\Phi)\equiv\sup\{t\ge0\,:\,\Phi(t)=0\}, \quad
b(\Phi)\equiv\inf\{t\ge0\,:\,\Phi(t)=\infty\}.
\end{equation*}

\begin{definition}[Young function]
An increasing function $\Phi:[0,\infty]\to[0,\infty]$ is called a Young function (or sometimes also called an Orlicz function) if it satisfies the following properties;
\begin{itemize}
\item[(1)] $0\le a(\Phi)<\infty$, $0<b(\Phi)\le\infty$,
\item[(2)] $\lim\limits_{t\to0}\Phi(t)=\Phi(0)=0$,
\item[(3)] $\Phi$ is convex on $[0,b(\Phi))$,
\item[(4)] if $b(\Phi)=\infty$, then $\lim\limits_{t\to\infty}\Phi(t)=\Phi(\infty)=\infty$,
\item[(5)] if $b(\Phi)<\infty$, then $\lim\limits_{t\to b(\Phi)-0}\Phi(t)=\Phi(b(\Phi))$.
\end{itemize}
\end{definition}

For $t>0$ and $f\in L^0({\mathbb R}^n)$, the distribution function $m(f,t)$ and the rearrangement function $f^\ast(t)$ are defined by
\begin{equation*}
m(f,t)\equiv|\{x\in{\mathbb R}^n:|f(x)|>t\}|,
\quad
f^\ast(t)\equiv\inf\{\alpha>0:m(f,\alpha)\le t\}.
\end{equation*}
Here it will be understood that $\inf\emptyset=\infty$.

\begin{definition}[weak Orlicz space]
For a Young function $\Phi:[0,\infty]\to[0,\infty]$, let
\begin{equation*}
{\rm w}L^\Phi({\mathbb R}^n)
\equiv
\left\{
f\in L^0({\mathbb R}^n)
:
\sup_{t>0}\Phi(t)m(kf,t)<\infty\,\text{for some $k>0$}
\right\},
\end{equation*}
\begin{equation*}
\|f\|_{{\rm w}L^\Phi}
\equiv
\inf
\left\{
\lambda>0
:
\sup_{t>0}\Phi(t)m\left(\frac f\lambda,t\right)\le1
\right\}.
\end{equation*}
\end{definition}

\begin{remark}
Let $f\in{\rm w}L^\Phi({\mathbb R}^n)$.
Then there exists $k>0$ such that
\begin{equation*}
\sup_{t>0}\Phi(t)m(kf,t)
\le1.
\end{equation*}
In fact, we put
\begin{equation*}
M\equiv
\sup_{t>0}\Phi(t)m(kf,t)
<\infty,
\end{equation*}
and assume that $M>1$.
Note that, by the convexity of $\Phi$,
\begin{equation*}
\frac1M\Phi(t)\ge\Phi\left(\frac tM\right),
\quad t\in[0,\infty].
\end{equation*}
Then, taking $0<k_0\le\dfrac1M$, we have
\begin{align*}
\sup_{t>0}\Phi(t)m(kk_0f,t)
&=
\sup_{t>0}\Phi(t)m\left(kf,\frac tk_0\right)
=
\sup_{t>0}\Phi(k_0t)m(kf,t)
\le
\sup_{t>0}\Phi\left(\frac tM\right)m(kf,t)\\
&\le
\sup_{t>0}\frac1M\Phi(t)m(kf,t)
=1.
\end{align*}
\end{remark}

A Young function $\Phi:[0,\infty]\to[0,\infty]$ is said to satisfy the $\Delta_2$-condition, denoted $\Phi\in\Delta_2$, if
\begin{equation*}
\Phi(2r)\le k\Phi(r)
\quad \text{for} \quad
r>0,
\end{equation*}
for some $k>1$.
A Young function $\Phi:[0,\infty]\to[0,\infty]$ is said to satisfy the $\nabla_2$-condition, denoted $\Phi\in\nabla_2$, if
\begin{equation*}
\Phi(r)\le\frac1{2k}\Phi(kr)
\quad \text{for} \quad
r\ge0,
\end{equation*}
for some $k>1$.

\begin{example}
\begin{itemize}
\item[(1)] $\Phi(t)=t^p$, $1\le p<\infty$, belongs to $\Delta_2$.
\item[(2)] $\Phi(t)=t^p$, $1<p<\infty$, belongs to $\nabla_2$.
\item[(3)] $\Phi(t)=t$ does not belong to $\nabla_2$.
\item[(4)] $\Phi(t)=t\log(3+t)$ belongs to $\Delta_2$, but does not belong to $\nabla_2$.
\item[(5)] $\Phi(t)=e^t-1$ belongs to $\nabla_2$, but does not belong to $\Delta_2$.
\end{itemize}
\end{example}

Let
\begin{equation*}\label{inverse function}
\Phi^{-1}(u)
\equiv
\begin{cases}
\inf\{t\ge0\,:\,\Phi(t)>u\}, & u\in[0,\infty),\\
\infty, & u=\infty.
\end{cases}
\end{equation*}
Then $\Phi^{-1}(u)$ is finite for all $u\in[0,\infty)$, continuous on $(0,\infty)$ and right continuous at $u=0$.
If $\Phi$ is bijective from $[0,\infty]$ to itself, then $\Phi^{-1}$ is the usual inverse
function of $\Phi$.
It is also known that
\begin{equation}\label{inverse equation}
\Phi(\Phi^{-1}(t))\le t\le\Phi^{-1}(\Phi(t)),
\quad t\in[0,\infty].
\end{equation}

For a Young function $\Phi:[0,\infty]\to[0,\infty]$, the complementary function is defined by
\begin{equation*}
\widetilde{\Phi}(r)
\equiv
\begin{cases}
\sup\{rs-\Phi(s):s\in[0,\infty)\}, & r\in[0,\infty),\\
\infty, & r=\infty.
\end{cases}
\end{equation*}
Then $\widetilde{\Phi}$ is also a Young function and $\widetilde{\widetilde{\Phi}}=\Phi$.
Note that $\Phi\in\nabla_2$ if and only if $\widetilde{\Phi}\in\Delta_2$.
It is known that
\begin{equation}\label{eq:201006-2}
r\le\Phi^{-1}(r)\widetilde{\Phi}^{-1}(r)\le2r
\quad \text{for} \quad
r\ge0.
\end{equation}

\begin{theorem}\label{thm:201006-1}
Let $\Phi:[0,\infty]\to[0,\infty]$ be a Young function such that $\Phi\in\Delta_2$.
Then
\begin{equation*}
L^{\Phi,1}({\mathbb R}^n)^\ast={\rm w}L^{\widetilde{\Phi}}({\mathbb R}^n)
\end{equation*}
with equivalence of quasi-norms, where the space $L^{\Phi,1}({\mathbb R}^n)$ is defined by the set of all measurable functions with the finite quasi-norm
\begin{equation*}
\|f\|_{L^{\Phi,1}}
\equiv
\int_0^\infty\Phi^{-1}\left(\frac1t\right)^{-1}f^\ast(t)\,\frac{{\rm d}t}t.
\end{equation*}
\end{theorem}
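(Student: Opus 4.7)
The plan is to prove both inclusions with matching quasi-norm estimates. The ``easy'' direction uses a Hölder-type inequality deduced from (\ref{eq:201006-2}), while the reverse direction relies on a Radon--Nikodym representation combined with a level-set testing argument.

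For the easy direction, given $g \in {\rm w}L^{\widetilde{\Phi}}({\mathbb R}^n)$, I would set $\phi_g(f) \equiv \int_{{\mathbb R}^n} f(x)g(x)\,dx$. By the remark following the weak-Orlicz definition, for each $\lambda$ slightly larger than $\|g\|_{{\rm w}L^{\widetilde{\Phi}}}$ one has $\sup_t \widetilde{\Phi}(t) m(g/\lambda,t) \le 1$, and this translates into the pointwise rearrangement bound $g^*(s) \le \lambda \widetilde{\Phi}^{-1}(1/s)$ (via the very definition of $g^*$ and of $\widetilde{\Phi}^{-1}$). Combining this with the Hardy--Littlewood rearrangement inequality $\int |fg| \le \int_0^\infty f^*(s) g^*(s)\,ds$ and the estimate $\widetilde{\Phi}^{-1}(1/s) \le 2/(s\Phi^{-1}(1/s))$ coming from (\ref{eq:201006-2}) then yields $|\phi_g(f)| \le 2\|g\|_{{\rm w}L^{\widetilde{\Phi}}}\|f\|_{L^{\Phi,1}}$, giving ${\rm w}L^{\widetilde{\Phi}} \hookrightarrow (L^{\Phi,1})^*$.

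For the reverse direction, given $\phi \in L^{\Phi,1}({\mathbb R}^n)^*$, I would first verify that every bounded measurable set $E$ satisfies $\chi_E \in L^{\Phi,1}$, with $\|\chi_E\|_{L^{\Phi,1}} = \int_0^{|E|} \Phi^{-1}(1/s)^{-1}\,ds/s < \infty$ (the finiteness uses the polynomial upper growth of $\Phi$ forced by $\Delta_2$). Then $\mu(E) \equiv \phi(\chi_E)$ is a countably additive signed measure, absolutely continuous with respect to Lebesgue measure, so Radon--Nikodym supplies a locally integrable $g$ with $\phi(\chi_E) = \int_E g\,dx$. Extending by linearity to simple functions and then to all of $L^{\Phi,1}$ by density---density of simple functions being a standard consequence of $\Phi \in \Delta_2$---gives the representation $\phi(f) = \int fg\,dx$ for all $f \in L^{\Phi,1}$.

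The main obstacle is the quantitative estimate $\|g\|_{{\rm w}L^{\widetilde{\Phi}}} \le C\|\phi\|$. For each $t > 0$, after truncating so that $E_t \equiv \{|g| > t\}$ has finite measure, I would test $\phi$ against $f = \mathrm{sgn}(g)\chi_{E_t}$ to obtain $t|E_t| \le \phi(f) \le \|\phi\|\cdot\|\chi_{E_t}\|_{L^{\Phi,1}}$. Using (\ref{eq:201006-2}) in the form $\Phi^{-1}(1/s)^{-1}/s \le \widetilde{\Phi}^{-1}(1/s)$, the problem reduces to the integral inequality
\begin{equation*}
\int_0^\sigma \widetilde{\Phi}^{-1}(1/s)\,ds \le C \sigma\, \widetilde{\Phi}^{-1}(1/\sigma) \quad (\sigma > 0),
\end{equation*}
which is exactly where the hypothesis $\Phi \in \Delta_2$ (equivalently $\widetilde{\Phi} \in \nabla_2$) is essential: the $\nabla_2$ condition yields, after iteration, a quantitative bound of the form $\widetilde{\Phi}^{-1}(\tau u) \le \tau^{1/p} \widetilde{\Phi}^{-1}(u)$ for $\tau \ge 1$ and some $p > 1$, which makes the required estimate hold with constant $p/(p-1)$. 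Rearranging the resulting inequality $t \le C\|\phi\|\widetilde{\Phi}^{-1}(1/|E_t|)$ into $\widetilde{\Phi}(t/(C\|\phi\|))|E_t| \le 1$ uniformly in $t$ then delivers $\|g\|_{{\rm w}L^{\widetilde{\Phi}}} \le C\|\phi\|$, closing the duality.
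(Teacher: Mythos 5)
Your proposal is correct and follows essentially the same route as the paper: the Radon--Nikodym representation, testing against $\mathrm{sgn}(g)\chi_{\{|g|>t\}}$ together with a $\Delta_2$-based bound $\|\chi_E\|_{L^{\Phi,1}}\lesssim\Phi^{-1}(1/|E|)^{-1}\sim|E|\,\widetilde{\Phi}^{-1}(1/|E|)$ for one inclusion, and the Hardy--Littlewood rearrangement inequality with \eqref{eq:201006-2} for the other. The only cosmetic difference is that you derive the key integral estimate from the iterated $\nabla_2$ condition of $\widetilde{\Phi}$, whereas the paper obtains the equivalent fact directly from the almost-monotonicity of $t^{-1}\Phi^{-1}(1/t)^{-q}$ (Lemma \ref{lem:210321-1} and Proposition \ref{prop:201107}).
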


\begin{definition}[Hardy-Littlewood maximal operator]
For a measurable function $f$ defined on ${\mathbb R}^n$, define a function $Mf$ by
\begin{equation*}
Mf(x)
\equiv
\sup_{Q\in{\mathcal Q}({\mathbb R}^n)}\frac{\chi_Q(x)}{|Q|}\int_Q|f(y)|\,{\rm d}y,
\quad x\in{\mathbb R}^n,
\end{equation*}
where ${\mathcal Q}({\mathbb R}^n)$ denotes the family of all cubes with parallel to coordinate axis in ${\mathbb R}^n$.
\end{definition}

\begin{theorem}\label{thm:201007-1}
Let $\Phi:[0,\infty]\to[0,\infty]$ be a Young function, and let $1<q<\infty$.
\begin{itemize}
\item[(1)] If $\Phi\in\nabla_2$, then we have
\begin{equation*}
\left\|\sup_{j\in{\mathbb N}}Mf_j\right\|_{{\rm w}L^\Phi}
\lesssim
\left\|\sup_{j\in{\mathbb N}}|f_j|\right\|_{{\rm w}L^\Phi}
\end{equation*}
for any sequence $\{f_j\}_{j=1}^\infty$ of measurable functions.
\item[(2)] If $\Phi\in\Delta_2\cap\nabla_2$, then we have
\begin{equation*}
\left\|\left(\sum_{j=1}^\infty(Mf_j)^q\right)^{\frac1q}\right\|_{{\rm w}L^\Phi}
\lesssim
\left\|\left(\sum_{j=1}^\infty|f_j|^q\right)^{\frac1q}\right\|_{{\rm w}L^\Phi}
\end{equation*}
for any sequence $\{f_j\}_{j=1}^\infty$ of measurable functions.
\end{itemize}
\end{theorem}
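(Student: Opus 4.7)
Both parts rest on the predual identification of Theorem~\ref{thm:201006-1} combined with a separate boundedness result for the Hardy--Littlewood maximal operator $M$ on the Orlicz--Lorentz spaces $L^{\widetilde{\Phi},1}$.

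For part (1), the first move is to exploit the pointwise inequality $Mf_j(x)\le M(\sup_k|f_k|)(x)$, which after taking the supremum in $j$ reduces the claim to the scalar estimate $\|Mg\|_{{\rm w}L^\Phi}\lesssim\|g\|_{{\rm w}L^\Phi}$ applied to $g=\sup_k|f_k|$. Since $\Phi\in\nabla_2$ is equivalent to $\widetilde{\Phi}\in\Delta_2$, Theorem~\ref{thm:201006-1} applies and delivers $(L^{\widetilde{\Phi},1})^*={\rm w}L^\Phi$, so that
\[
\|Mg\|_{{\rm w}L^\Phi}\approx\sup\left\{\int_{\rn}(Mg)(x)h(x)\,{\rm d}x\,:\,h\ge 0,\ \|h\|_{L^{\widetilde{\Phi},1}}\le 1\right\}.
\]
I would then combine three ingredients: the pointwise adjoint estimate $\int(Mg)h\,{\rm d}x\lesssim\int g(Mh)\,{\rm d}x$ for nonneg $g,h$; a generalized Hölder inequality $\int|gh|\,{\rm d}x\lesssim\|g\|_{{\rm w}L^\Phi}\|h\|_{L^{\widetilde{\Phi},1}}$, derived from the rearrangement bound $f^\ast(u)\le\|f\|_{{\rm w}L^\Phi}\Phi^{-1}(1/u)$ together with~\eqref{eq:201006-2}; and the boundedness of $M$ on $L^{\widetilde{\Phi},1}$. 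This chain gives
\[
\int(Mg)h\,{\rm d}x\lesssim\int g(Mh)\,{\rm d}x\lesssim\|g\|_{{\rm w}L^\Phi}\|Mh\|_{L^{\widetilde{\Phi},1}}\lesssim\|g\|_{{\rm w}L^\Phi},
\]
and the supremum over $h$ finishes part (1).

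For part (2), the stronger hypothesis $\Phi\in\Delta_2\cap\nabla_2$ forces $\widetilde{\Phi}\in\Delta_2\cap\nabla_2$, which in turn permits a vector-valued Fefferman--Stein inequality on $L^{\widetilde{\Phi},1}$ with exponent $q'$. Writing $F=(\sum_j(Mf_j)^q)^{1/q}$ and testing against an arbitrary nonneg $h$ with $\|h\|_{L^{\widetilde{\Phi},1}}\le 1$, the $\ell^q$-duality identity $(\sum_j a_j^q)^{1/q}=\sup\{\sum_j a_j c_j:(c_j)\ge 0,\sum_j c_j^{q'}\le 1\}$ yields
\[
\int_{\rn}F(x)h(x)\,{\rm d}x=\sup_{(G_j)}\sum_j\int_{\rn}(Mf_j)(x)G_j(x)\,{\rm d}x,
\]
the supremum taken over nonneg sequences $(G_j)$ satisfying $(\sum_j G_j^{q'})^{1/q'}\le h$. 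Applying the adjoint estimate termwise, Hölder in $\ell^q$, and the generalized Hölder from the previous paragraph, each such sum is bounded by
\[
C\left\|\left(\sum_j|f_j|^q\right)^{\frac1q}\right\|_{{\rm w}L^\Phi}\left\|\left(\sum_j(MG_j)^{q'}\right)^{\frac1{q'}}\right\|_{L^{\widetilde{\Phi},1}},
\]
and the vector-valued maximal inequality on $L^{\widetilde{\Phi},1}$ dominates the last factor by $\|(\sum_j G_j^{q'})^{1/q'}\|_{L^{\widetilde{\Phi},1}}\le\|h\|_{L^{\widetilde{\Phi},1}}\le 1$. Taking the supremum over $h$ delivers the claim.

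I expect the main obstacle to lie not in the duality machinery but in the underlying boundedness of $M$ on the Orlicz--Lorentz space $L^{\widetilde{\Phi},1}$: the scalar version for part (1), and the $\ell^{q'}$-vector-valued version for part (2). The scalar statement requires only $\widetilde{\Phi}\in\Delta_2$, whereas the vector-valued refinement demands $\widetilde{\Phi}\in\Delta_2\cap\nabla_2$, which is precisely why part (2) imposes both conditions on $\Phi$. The technical heart of the argument is thus the control of the decreasing rearrangement $(Mg)^\ast(t)$ against the weight $t\mapsto\Phi^{-1}(1/t)^{-1}/t$ appearing in the $L^{\widetilde{\Phi},1}$-quasinorm.
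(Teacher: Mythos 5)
Your reduction of part (1) to the scalar bound $\|Mg\|_{{\rm w}L^\Phi}\lesssim\|g\|_{{\rm w}L^\Phi}$ via the pointwise estimate is exactly the paper's first step, but the way you then prove the scalar bound fails, and the same defect sinks part (2). The pivot of your argument is the ``adjoint estimate'' $\int_{\rn}(Mg)h\,{\rm d}x\lesssim\int_{\rn}g\,(Mh)\,{\rm d}x$. This is the endpoint $p=1$ Fefferman--Stein inequality with $Mh$ on the right-hand side, and it is false: in dimension one take $g=\chi_{[0,\varepsilon]}$ and $h=\chi_{[0,1]}$, so that $\int(Mg)h\sim\varepsilon\log(1/\varepsilon)$ while $\int g\,(Mh)\sim\varepsilon$, and the ratio diverges as $\varepsilon\to0$ (the valid substitutes put $M(Mh)$ or $M_{L\log L}h$ on the right, which would then break your closing step). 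A second problem in part (1) is a hypothesis mismatch: your chain ends with the boundedness of $M$ on $L^{\widetilde\Phi,1}$, and by the Ari\~no--Muckenhoupt criterion (Theorem \ref{cor:ArMu90-1.9} with Lemma \ref{lem:201027-1}) this requires $\widetilde\Phi\in\nabla_2$, i.e.\ $\Phi\in\Delta_2$ --- not merely $\widetilde\Phi\in\Delta_2$ as you assert ($M$ is not bounded on $L^{1}=L^{\widetilde\Phi,1}$ for $\widetilde\Phi(t)=t$, which is in $\Delta_2$ but not $\nabla_2$). Since part (1) assumes only $\Phi\in\nabla_2$, your route cannot close; the paper finishes in one line by quoting the scalar boundedness of $M$ on ${\rm w}L^\Phi=L^{\Phi,\infty}$ (Theorem \ref{prop:201027-1}), which holds under $\nabla_2$ alone, with no duality at all.

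For part (2) there is a further, structural gap: after the $\ell^q$--$\ell^{q'}$ duality you invoke a vector-valued Fefferman--Stein inequality on the Orlicz--Lorentz space $L^{\widetilde\Phi,1}$. Nothing in the paper (or in your proposal) supplies such an inequality; it is of essentially the same depth as the statement being proved, so the argument is circular at that point. The paper's mechanism avoids both obstacles. It first replaces $\Phi$ by $\Psi=\Phi_{1/\eta}$ with $1<\eta<p_-(\Phi)$ (Lemma \ref{lem:210403}), so that the quantity to be estimated becomes $\int(\sum_j(Mf_j)^q)^{\eta/q}\varphi$ with $\varphi\in L^{\widetilde\Psi,1}$ of norm one. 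It then majorizes $\varphi\le(M[\varphi^{1/\theta}])^\theta$, which is an $A_1\subset A_\eta$ weight, applies the \emph{weighted} vector-valued maximal inequality of Andersen--John with outer exponent $\eta>1$ (this is where the false $p=1$ endpoint is sidestepped), and finally controls $\|(M[\varphi^{1/\theta}])^\theta\|_{L^{\widetilde\Psi,1}}$ by the scalar boundedness of $M$ on an Orlicz--Lorentz space, using that $\widetilde\Psi_\theta\in\nabla_2$ for a suitable $\theta\in(0,1)$. The power trick with $\eta$ and the $A_1$-majorization of the dual test function are the two ideas missing from your outline, and both hypotheses $\Delta_2$ and $\nabla_2$ enter precisely to make $1<\eta<p_-(\Phi)\le p_+(\Phi)<\infty$ possible.
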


We organize the remaining part of the paper as follows:
We prepare the statements for the proof of Theorem \ref{thm:201006-1} in Section \ref{s2}, and show Theorem \ref{thm:201006-1} in Section \ref{s3}.
Next, we provide the boundedness of the Hardy-Littlewood maximal operator on generalized Lorentz spaces in Section \ref{s4}.
Finally, we prove Theorem \ref{thm:201007-1} in Section \ref{s5}.

\section{Preliminaries}\label{s2}

\subsection{Statements of inverse Young function for $\Delta_2$ and $\nabla_2$ conditions}

\begin{lemma}\label{lem:201107-1}
Let $\Phi:[0,\infty]\to[0,\infty]$ be a Young function.
Then the followings are obtained:
\begin{itemize}
\item[(1)] $\Phi\in\Delta_2$ if and only if there exists $k>1$ such that
\begin{equation*}
\Phi^{-1}(ku)\ge2\Phi^{-1}(u).
\end{equation*}
\item[(2)] $\Phi\in\nabla_2$ if and only if there exists $k>1$ such that
\begin{equation*}
\Phi^{-1}(2ku)\le k\Phi^{-1}(u).
\end{equation*}
\end{itemize}
\end{lemma}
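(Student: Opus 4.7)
I will prove each of the four implications by a single substitution into the given hypothesis, followed by one application of~\eqref{inverse equation}, $\Phi(\Phi^{-1}(t))\le t\le\Phi^{-1}(\Phi(t))$. The only extra ingredient I need is the elementary observation that $\Phi(s)\le v\Rightarrow s\le\Phi^{-1}(v)$, which is immediate from the infimum definition of $\Phi^{-1}$ and the monotonicity of $\Phi$ (if $\Phi(s)\le v$ then no $t\le s$ lies in $\{t:\Phi(t)>v\}$, so $\Phi^{-1}(v)\ge s$).

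\noindent\textbf{Part~(1).} For the forward direction I would substitute $r=\Phi^{-1}(u)$ into $\Phi(2r)\le k\Phi(r)$ and use~\eqref{inverse equation}:
\[
\Phi(2\Phi^{-1}(u))\le k\Phi(\Phi^{-1}(u))\le ku,
\]
from which the quasi-inverse observation gives $2\Phi^{-1}(u)\le\Phi^{-1}(ku)$. For the converse I would substitute $u=\Phi(r)$ into $\Phi^{-1}(ku)\ge 2\Phi^{-1}(u)$ to obtain $\Phi^{-1}(k\Phi(r))\ge 2\Phi^{-1}(\Phi(r))\ge 2r$, and then apply the increasing map $\Phi$ to the outer inequality, using the other half of~\eqref{inverse equation}: $\Phi(2r)\le\Phi(\Phi^{-1}(k\Phi(r)))\le k\Phi(r)$.

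\noindent\textbf{Part~(2).} I rewrite $\nabla_2$ as $2k\Phi(r)\le\Phi(kr)$. For the forward direction, substitute $r=\tfrac{1}{k}\Phi^{-1}(2ku)$:
\[
2k\Phi\!\left(\tfrac{1}{k}\Phi^{-1}(2ku)\right)\le\Phi(\Phi^{-1}(2ku))\le 2ku,
\]
so $\Phi(\tfrac{1}{k}\Phi^{-1}(2ku))\le u$, and the quasi-inverse observation yields $\tfrac{1}{k}\Phi^{-1}(2ku)\le\Phi^{-1}(u)$. For the converse, fix $r>0$ with $\Phi(r)>0$ and any $u<\Phi(r)$; then $r$ lies in $\{t:\Phi(t)>u\}$, forcing $\Phi^{-1}(u)\le r$ and hence, by hypothesis, $\Phi^{-1}(2ku)\le kr$. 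From the definition of $\Phi^{-1}$ as an infimum, $\Phi(t)>2ku$ for every $t>kr$; invoking the right-continuity of $\Phi$ at $kr$ (valid on $[0,b(\Phi))$ by convexity), I obtain $\Phi(kr)\ge 2ku$, and letting $u\uparrow\Phi(r)$ gives $\Phi(kr)\ge 2k\Phi(r)$.

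\noindent\textbf{Main obstacle.} The delicate step is the converse direction of~(2): the quasi-inverse observation naturally converts a bound on $\Phi$ into a bound on $\Phi^{-1}$, but here I must run it backwards and extract a lower bound on $\Phi(kr)$ from an upper bound on $\Phi^{-1}(2ku)$. This forces the two-step limiting argument (sweep $u\uparrow\Phi(r)$ and exploit right-continuity of $\Phi$) and is the only place where convexity of $\Phi$ enters beyond the basic identities in~\eqref{inverse equation}; the other three implications collapse to one-line symbolic manipulations that interlock the two halves of~\eqref{inverse equation}.
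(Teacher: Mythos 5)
Your forward direction of (1), converse of (1), and forward direction of (2) match the paper's proof almost verbatim: the same substitutions into the hypothesis, the same use of both halves of~\eqref{inverse equation}, and the same ``quasi-inverse observation'' $\Phi(s)\le v\Rightarrow s\le\Phi^{-1}(v)$.

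The one place you diverge is exactly the step you flagged as the ``main obstacle,'' the converse of (2), and there you have overcomplicated matters. Your limiting argument (sweep $u\uparrow\Phi(r)$, invoke right-continuity of $\Phi$) is essentially correct, but it carries a small unexamined edge case when $kr=b(\Phi)$ with $b(\Phi)<\infty$: there $\Phi$ jumps to $\infty$ immediately to the right of $kr$, so the inclusion ``$\Phi(t)>2ku$ for all $t>kr$'' gives no information, and right-continuity is not available to pass to $t=kr$. (It can be patched by letting $r'\uparrow r$ and using left-continuity at $b(\Phi)$, but you did not do this.) More to the point, the paper avoids all of this: the converse of (2) submits to the \emph{same} one-line substitution pattern as the other three implications. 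Set $t=\Phi(ku)$, so that $ku\le\Phi^{-1}(t)$ by~\eqref{inverse equation}; then, applying the hypothesis $\Phi^{-1}(2k\,\cdot)\le k\,\Phi^{-1}(\cdot)$ at the point $t/(2k)$,
\[
\Phi(u)\le\Phi\Bigl(\tfrac1k\Phi^{-1}(t)\Bigr)\le\Phi\Bigl(\Phi^{-1}\bigl(\tfrac{t}{2k}\bigr)\Bigr)\le\tfrac{t}{2k}=\tfrac1{2k}\Phi(ku),
\]
which is exactly $\nabla_2$. No limit, no right-continuity, no use of convexity beyond what~\eqref{inverse equation} already encodes. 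So your claim that this direction is intrinsically ``delicate'' and is ``the only place where convexity enters'' is an artifact of the route you chose, not of the lemma: try the substitution $t=\Phi(ku)$ (dual to your $u=\Phi(r)$ in (1)) and the asymmetry you perceived disappears.
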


\begin{proof}
First we show (1).
We assume that $\Phi\in\Delta_2$.
Then
\begin{equation*}
\Phi(2\Phi^{-1}(u)) \le k \Phi(\Phi^{-1}u) \le ku
\end{equation*}
and therefore we have
\begin{equation*}
2\Phi^{-1}(u) \le \Phi^{-1}(ku).
\end{equation*}
Conversely, we assume that there exists $k>1$ such that
\begin{equation*}
\Phi^{-1}(ku)\ge2\Phi^{-1}(u).
\end{equation*}
For $u\ge0$, taking $t = \Phi(u)$, we have $u \le \Phi^{-1}(\Phi(u)) = \Phi^{-1}(t)$.
Hence
\begin{equation*}
\Phi(2u) \le \Phi(2 \Phi^{-1}(t)) \le \Phi( \Phi^{-1}(kt)) \le  kt \le k\Phi(u).
\end{equation*}

Next we show (2).
We assume that $\Phi\in\nabla_2$.
Then, for any $v\ge0$,
\begin{equation*}
\Phi\left(\frac1k\Phi^{-1}(v)\right) \le \frac1{2k}\Phi(\Phi^{-1}(v)) \le \frac{v}{2k},
\end{equation*}
and hence
\begin{equation*}
\frac1k\Phi^{-1}(v) \le \Phi^{-1}\left( \frac{v}{2k}\right).
\end{equation*}
Putting $v \mapsto 2ku$, weh have
\begin{equation*}
\Phi^{-1}(2ku) \le k\Phi^{-1}(u).
\end{equation*}
Conversely, we assume that there exists $k>1$ such that
\begin{equation*}
\Phi^{-1}(2ku)\le k\Phi^{-1}(u).
\end{equation*}
For $u\ge0$, taking $t = \Phi(ku)$, we have $ku \le \Phi^{-1}(\Phi(ku)) = \Phi^{-1}(t)$.
Hence
\begin{equation*}
\Phi(u) \le \Phi\left(\frac1k \Phi^{-1}(t)\right) \le\Phi\left(\Phi^{-1}\left(\frac{t}{2k}\right)\right) \le 
\frac{t}{2k} = \frac1{2k} \Phi(ku).
\end{equation*}
\end{proof}

\begin{lemma}\label{lem:210321-1}
Let $\Phi:[0,\infty]\to[0,\infty]$ be a Young function.
If $\Phi \in \Delta_2$, then there exists $q \in(0, \infty)$ and $C \in [1, \infty)$ such that
\begin{equation*}
\dfrac1t \Phi^{-1}\left(\dfrac1t\right)^{-q}
\le C
\dfrac1s \Phi^{-1}\left(\dfrac1s\right)^{-q}
\quad \text{for} \quad  t\le s.
\end{equation*}
Conversely, if there exists $q \in(0, \infty)$ and $C \in [1, \infty)$ such that
\begin{equation*}
\dfrac1t \Phi^{-1}\left(\dfrac1t\right)^{-q}
\le C\dfrac1s \Phi^{-1}\left(\dfrac1s\right)^{-q}
\quad \text{for} \quad  t\le s,
\end{equation*}
then $\Phi\in\Delta_2$.
\end{lemma}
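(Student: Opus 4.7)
The plan is to reduce both implications to Lemma~\ref{lem:201107-1}(1), which characterizes $\Phi\in\Delta_2$ by the existence of $k>1$ with $\Phi^{-1}(ku)\ge 2\Phi^{-1}(u)$. Substituting $u=1/s$ and $v=1/t$ turns the condition $t\le s$ into $u\le v$, and rewrites the displayed inequality as
\[
v\,\Phi^{-1}(v)^{-q}\le C\,u\,\Phi^{-1}(u)^{-q},\qquad 0<u\le v,
\]
i.e., the map $w\mapsto \Phi^{-1}(w)^q/w$ is $C$-quasi-increasing. I will prove this reformulated statement in both directions.

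For the forward direction, I would start from $\Phi^{-1}(ku)\ge 2\Phi^{-1}(u)$ and iterate to obtain $\Phi^{-1}(k^n u)\ge 2^n\Phi^{-1}(u)$ for every $n\in{\mathbb N}$. Given $0<u\le v$, pick the integer $n\ge 0$ with $k^n u\le v<k^{n+1}u$. Monotonicity of $\Phi^{-1}$ together with the iterated bound then yield
\[
\frac{\Phi^{-1}(v)^q}{v}\ge \frac{2^{nq}\Phi^{-1}(u)^q}{k^{n+1}u}=\frac{1}{k}\left(\frac{2^q}{k}\right)^n\frac{\Phi^{-1}(u)^q}{u}.
\]
Choosing $q=\log_2 k$ makes $2^q/k=1$, so the $n$-dependence disappears and the constant $C=k$ works.

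For the converse, I would specialize the assumed inequality to $v=Ku$ with $K=2^qC$. This produces $\Phi^{-1}(Ku)^q\ge (K/C)\Phi^{-1}(u)^q=2^q\Phi^{-1}(u)^q$, hence $\Phi^{-1}(Ku)\ge 2\Phi^{-1}(u)$, and Lemma~\ref{lem:201107-1}(1) then delivers $\Phi\in\Delta_2$.

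The only mildly delicate point is the calibration $q=\log_2 k$ in the forward direction, dictated by the need to absorb the factor $k^{n+1}/2^{nq}$ uniformly in $n$; with that choice in hand every remaining step reduces to the substitution $u=1/s$, $v=1/t$ and elementary manipulation.
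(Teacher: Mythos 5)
Your proof is correct and follows essentially the same route as the paper: both directions reduce to Lemma~\ref{lem:201107-1}(1), the forward direction iterates $\Phi^{-1}(ku)\ge 2\Phi^{-1}(u)$ roughly $\log_k(s/t)$ times with the same calibration $q=\log_2 k$, $C=k$ (you pick the integer $n$ explicitly where the paper writes the exponent with a floor function, but the arithmetic is identical), and the converse specializes the inequality to $v=Ku$ with $K=2^qC$ exactly as the paper does.
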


\begin{proof}
If $\Phi\in\Delta_2$, putting $q = \log_2k$ and $C = k$,we calculate
\begin{align*}
\Phi^{-1}\left(\frac1t\right)
&=
\Phi^{-1}\left(k^{\log_k \frac{s}{t}  }\frac1s\right)
\ge
\Phi^{-1}\left(k^{\left[\log_k \frac{s}{t}\right]  }\frac1s\right)
\ge
2^{\left[\log_k \frac{s}{t}\right] } \Phi^{-1}\left( \frac1s\right)
\ge
2^{\log_k \frac{s}{t}-1 } \Phi^{-1}\left( \frac1s\right) \\
&=
\left(\left(\frac12\right)^{\frac1{\log_k2}}\frac{s}{t} \right)^{\log_k2}\Phi^{-1}\left( \frac1s\right)
=\left(\frac1{C}\frac{s}{t} \right)^{\frac1q}\Phi^{-1}\left( \frac1s\right),
\end{align*}
where $[\cdot]$ stands for the Gauss symbol.
This is the desired result.

Conversely, we suppose that there exists $q \in(0,\infty)$ and $C \in [1, \infty)$ such that 
\begin{equation*}
\frac1t \Phi^{-1}\left(\frac1t\right)^{-q}
\le C
\frac1s \Phi^{-1}\left(\frac1s\right)^{-q}
\quad \text{for} \quad  t\le s.
\end{equation*}
Then, by the change of variables $u=\dfrac1t$ and $v=\dfrac1s$, we compute
\begin{align*}
u\Phi^{-1}(u)^{-q} \le v \Phi^{-1}(v)^{-q},
\end{align*}
or equivalently
\begin{align*}
\Phi^{-1}(v) \le \left( \frac{v}{u} \right)^{\frac1q} \Phi^{-1}(u).
\end{align*}
Consequently, choosing $u=kv$ and $k = 2^qC >1$, we obtain
\begin{align*}
\Phi^{-1} \left(v\right)
\le
\left(\frac1k\right)^{\frac1q} \Phi^{-1}\left( kv\right) 
\le
\frac12 \Phi^{-1}\left( kv\right) 
\end{align*}
as desired by Lemma \ref{lem:201107-1} (1).
\end{proof}

\subsection{Orlicz-Lorentz spaces and weak-type Orlicz spaces}

\begin{definition}[Orlicz-Lorentz space]
For a parameter $0<q\le\infty$ and a Young function $\Phi:[0,\infty]\to[0,\infty]$, let
\begin{equation*}
L^{\Phi,q}({\mathbb R}^n)
\equiv
\left\{f\in L^0({\mathbb R}^n):\|f\|_{L^{\Phi,q}}<\infty\right\},
\end{equation*}
endowed with the quasi-norm
\begin{equation*}
\|f\|_{L^{\Phi,q}}
\equiv
\begin{cases}
\displaystyle
\left(
\int_0^\infty\left[\Phi^{-1}\left(\frac1t\right)^{-1}f^\ast(t)\right]^q\,\frac{{\rm d}t}t
\right)^{\frac1q}, & 0<q<\infty,\\
\displaystyle\sup_{t>0}\Phi^{-1}\left(\frac1t\right)^{-1}f^\ast(t), & q=\infty.
\end{cases}
\end{equation*}
\end{definition}

\begin{definition}[Orlicz space]
For a Young function $\Phi:[0,\infty]\to[0,\infty]$, let
\begin{equation*}
L^\Phi({\mathbb R}^n)
\equiv
\left\{
f\in L^0({\mathbb R}^n)
:
\int_{{\mathbb R}^n}\Phi(k|f(x)|)\,{\rm d}x<\infty\,\text{for some $k>0$}
\right\},
\end{equation*}
\begin{equation*}
\|f\|_{L^\Phi}
\equiv
\inf
\left\{
\lambda>0
:
\int_{{\mathbb R}^n}\Phi\left(\frac{|f(x)|}\lambda\right)\,{\rm d}x\le1
\right\}.
\end{equation*}
\end{definition}

\begin{definition}
Let $\Phi:[0,\infty]\to[0,\infty]$ be a Young function.
The weak-type Orlicz space ${\rm W}L^\Phi({\mathbb R}^n)$ is defined by
\begin{equation*}
{\rm W}L^\Phi({\mathbb R}^n)
\equiv
\left\{
f\in L^0({\mathbb R}^n)
\,:\,
\|f\|_{{\rm W}L^\Phi}\equiv\sup_{t>0}t\|\chi_{\{x\in{\mathbb R}^n\,:\,|f(x)|>t\}}\|_{L^\Phi}
\right\}.
\end{equation*}
\end{definition}

\begin{proposition}\label{prop:weak Orlicz space}
Let $\Phi:[0,\infty]\to[0,\infty]$ be a Young function.
Then we have ${\rm w}L^\Phi({\mathbb R}^n)={\rm W}L^\Phi({\mathbb R}^n)=L^{\Phi,\infty}({\mathbb R}^n)$.
\end{proposition}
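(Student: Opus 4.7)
The plan is to show both equalities by reducing every norm to the common expression $\sup_{s>0} f^\ast(s)/\Phi^{-1}(1/s)$, which is exactly $\|f\|_{L^{\Phi,\infty}}$. The three inputs are: the explicit Orlicz norm of a characteristic function, the definition of $\|\cdot\|_{\mathrm{w}L^\Phi}$ rewritten via a dilation, and the standard duality between the distribution function $m(f,\cdot)$ and the rearrangement $f^\ast$.

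First I would establish the identity $\|\chi_E\|_{L^\Phi}=1/\Phi^{-1}(1/|E|)$ for every measurable $E\subset\mathbb R^n$. This is a direct computation: $\int_{\mathbb R^n}\Phi(\chi_E/\lambda)\,\mathrm dx=\Phi(1/\lambda)|E|\le 1$ iff $\Phi(1/\lambda)\le 1/|E|$, and by \eqref{inverse equation} this is equivalent to $1/\lambda\le\Phi^{-1}(1/|E|)$. Applying this to $E=\{|f|>t\}$ gives
\begin{equation*}
\|f\|_{\mathrm{W}L^\Phi}=\sup_{t>0}\frac{t}{\Phi^{-1}\!\bigl(1/m(f,t)\bigr)}.
\end{equation*}

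Next I would treat $\mathrm{w}L^\Phi$. Since $m(f/\lambda,t)=m(f,\lambda t)$, the change of variable $u=\lambda t$ converts $\sup_{t>0}\Phi(t)m(f/\lambda,t)\le 1$ into $\Phi(u/\lambda)\le 1/m(f,u)$ for all $u>0$. Using the two halves of \eqref{inverse equation} in opposite directions I would show that this last inequality holds for every $u>0$ if and only if $\lambda\ge u/\Phi^{-1}(1/m(f,u))$ for every $u>0$; taking the infimum over $\lambda$ yields
\begin{equation*}
\|f\|_{\mathrm{w}L^\Phi}=\sup_{u>0}\frac{u}{\Phi^{-1}\!\bigl(1/m(f,u)\bigr)}=\|f\|_{\mathrm{W}L^\Phi}.
\end{equation*}

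Finally, I would pass from this common expression to $\|f\|_{L^{\Phi,\infty}}=\sup_{s>0}f^\ast(s)/\Phi^{-1}(1/s)$ by the usual change of variable between $m(f,\cdot)$ and $f^\ast$. For the inequality $\sup_u u/\Phi^{-1}(1/m(f,u))\le\sup_s f^\ast(s)/\Phi^{-1}(1/s)$, I set $s=m(f,u)$ and use $f^\ast(m(f,u))\le u$. For the reverse inequality I set $u=f^\ast(s)$ and use the right-continuity of $m(f,\cdot)$, which gives $m(f,f^\ast(s))\le s$, together with the monotonicity of $\Phi^{-1}$.

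The main obstacle is step two, where $\Phi^{-1}$ is only a generalized inverse and the arguments $m(f,u)$ may be $0$ or $\infty$. The conversion between $\Phi(u/\lambda)\le 1/m(f,u)$ and $u/\lambda\le\Phi^{-1}(1/m(f,u))$ must be carried out carefully using both halves of \eqref{inverse equation} and the definition $\Phi^{-1}(r)=\inf\{t:\Phi(t)>r\}$; the same care is needed in the final change of variable when $m(f,u)\in\{0,\infty\}$ or when $f^\ast$ jumps. Once these boundary cases are handled in the standard way, the chain of equalities closes and the three spaces coincide with equivalence (indeed, equality) of quasi-norms.
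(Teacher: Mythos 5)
Your steps 1 and 2 are correct and match the paper's argument in substance: the identity $\|\chi_E\|_{L^\Phi}=\Phi^{-1}(1/|E|)^{-1}$ together with the chain of equivalences (rewritten from the $\Delta_2$-free fact that $\Phi(s)\le r$ iff $s\le\Phi^{-1}(r)$, which is encoded in \eqref{inverse equation}) gives $\|f\|_{\mathrm{w}L^\Phi}=\|f\|_{\mathrm{W}L^\Phi}=\sup_{t>0}t\,\Phi^{-1}(1/m(f,t))^{-1}$. The paper does this in two directions with an $\varepsilon$-argument; your iff chain is a tidier packaging of the same idea. For the remaining equality $\mathrm{W}L^\Phi=L^{\Phi,\infty}$ the paper simply cites Grafakos, whereas you attempt a direct proof; that attempt is where the proposal goes wrong.

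In step 3 the two auxiliary inequalities you invoke point the wrong way. To prove $\sup_u u/\Phi^{-1}(1/m(f,u))\le\sup_s f^\ast(s)/\Phi^{-1}(1/s)$ you would need, for the chosen $s$, a bound of the form $u\le f^\ast(s)$; but taking $s=m(f,u)$ and using $f^\ast(m(f,u))\le u$ yields only $f^\ast(m(f,u))/\Phi^{-1}(1/m(f,u))\le u/\Phi^{-1}(1/m(f,u))$, i.e.\ a lower bound on the left-hand supremum, not an upper bound. The same issue occurs with the substitution $u=f^\ast(s)$ and $m(f,f^\ast(s))\le s$: monotonicity of $\Phi^{-1}$ then gives $f^\ast(s)/\Phi^{-1}(1/m(f,f^\ast(s)))\le f^\ast(s)/\Phi^{-1}(1/s)$, again the wrong comparison. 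The correct argument uses strict substitutions and a limit: for fixed $u$ take $s\uparrow m(f,u)$ (so $s<m(f,u)$ forces $f^\ast(s)\ge u$), and let continuity of $\Phi^{-1}$ on $(0,\infty)$ pass $\Phi^{-1}(1/s)\to\Phi^{-1}(1/m(f,u))$; symmetrically, for fixed $s$ take $u\uparrow f^\ast(s)$ (so $u<f^\ast(s)$ forces $m(f,u)>s$). Equivalently, one can note that $\{(u,s):s<m(f,u)\}=\{(u,s):u<f^\ast(s)\}$ (both equal the open region under the graph, by $m(f,u)\le s\Leftrightarrow f^\ast(s)\le u$) and that $u/\Phi^{-1}(1/s)$ is increasing in each variable separately, so the two suprema coincide. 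With this repair your proof is complete and even more self-contained than the paper's, which defers this step to a reference.
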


\begin{proof}
See \cite[Proposition 1.4.5 (16)]{Grafakos14-1} for the equality ${\rm W}L^\Phi({\mathbb R}^n)=L^{\Phi,\infty}({\mathbb R}^n)$ in detail.

When we choose $f\in{\rm w}L^\Phi({\mathbb R}^n)$, we prove $\|f\|_{{\rm W}L^\Phi}\le\|f\|_{{\rm w}L^\Phi}$.
Fix a sufficiently small number $\varepsilon>0$.
Then
\begin{equation*}
\sup_{t>0}\Phi(t)m\left(\frac f{\|f\|_{{\rm w}L^\Phi}+\varepsilon},t\right)\le1.
\end{equation*}
Changing $t\mapsto\dfrac t{\|f\|_{{\rm w}L^\Phi}+\varepsilon}$, we have
\begin{equation*}
\sup_{t>0}\Phi\left(\frac t{\|f\|_{{\rm w}L^\Phi}+\varepsilon}\right)m(f,t)\le1.
\end{equation*}
By the equation (\ref{inverse equation}), for all $t>0$,
\begin{equation*}
\frac t{\|f\|_{{\rm w}L^\Phi}+\varepsilon}\le\Phi^{-1}\left(\frac1{m(f,t)}\right),
\end{equation*}
or equivalently,
\begin{equation*}
t\Phi^{-1}\left(\frac1{m(f,t)}\right)^{-1}\le\|f\|_{{\rm w}L^\Phi}+\varepsilon.
\end{equation*}
Thus we obtain
\begin{equation*}
\|f\|_{{\rm W}L^\Phi}\le\|f\|_{{\rm w}L^\Phi}+\varepsilon.
\end{equation*}

Let $f\in{\rm W}L^\Phi({\mathbb R}^n)$.
We verify $\|f\|_{{\rm W}L^\Phi}\ge\|f\|_{{\rm w}L^\Phi}$.
Remark that for all $t>0$,
\begin{equation*}
t\Phi^{-1}\left(\frac1{m(f, t)  }\right)^{-1}
\le
\|f\|_{{\rm W}L^\Phi}.
\end{equation*}
We calculate
\begin{align*}
\Phi\left(\frac t{\|f\|_{{\rm W}L^\Phi}}\right)
\le
\Phi\left(\Phi^{-1}\left(\frac1{ m(f, t) }\right)\right)
\le
\frac1{ m(f, t)  },
\end{align*}
and then
\begin{equation*}
\Phi\left(\frac t{\|f\|_{{\rm W}L^\Phi}}\right)m(f, t)
\le
1.
\end{equation*}
Hence
\begin{equation*}
\sup_{t>0}\Phi\left(\frac t{\|f\|_{{\rm W}L^\Phi}}\right)m(f, t) 
\le
1.
\end{equation*}
Therefore $\|f\|_{{\rm w}L^\Phi}\le\|f\|_{{\rm W}L^\Phi}$.
\end{proof}

\begin{proposition}\label{prop:201107}
Let $0<q<\infty$, and let $\Phi:[0,\infty]\to[0,\infty]$ be a Young function.
Then for any measurable set $E\subset{\mathbb R}^n$, the followings are hold:
\begin{itemize}
\item[(1)] $\|\chi_E\|_{L^{\Phi,q}}\gtrsim\Phi^{-1}\left(\dfrac1{|E|}\right)^{-1}$ and $\|\chi_E\|_{L^{\Phi,\infty}}=\Phi^{-1}\left(\dfrac1{|E|}\right)^{-1}$.
\item[(2)] $\Phi\in\Delta_2$ implies $\|\chi_E\|_{L^{\Phi,q}}\sim\Phi^{-1}\left(\dfrac1{|E|}\right)^{-1}$.
\end{itemize}
\end{proposition}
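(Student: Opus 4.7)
The plan is to compute $\chi_E^\ast$ explicitly and reduce every quasi-norm involved to a one-dimensional expression in $\Phi^{-1}(1/t)^{-1}$ over $(0,|E|)$. Since $\chi_E^\ast(t)=\chi_{(0,|E|)}(t)$, the $L^{\Phi,q}$ quasi-norm collapses to a weighted integral (respectively supremum, for $q=\infty$) of $\Phi^{-1}(1/t)^{-1}$. Throughout I would use that $\Phi^{-1}$ is nondecreasing and continuous on $(0,\infty)$, and that because $\Phi$ is convex with $\Phi(0)=0$, its inverse $\Phi^{-1}$ is concave with $\Phi^{-1}(0)\ge 0$; consequently $u\mapsto\Phi^{-1}(u)/u$ is nonincreasing, so in particular $\Phi^{-1}(2u)\le 2\Phi^{-1}(u)$.

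For the case $q=\infty$ in (1), the function $t\mapsto\Phi^{-1}(1/t)^{-1}$ is nondecreasing and continuous on $(0,\infty)$, so
\[
\|\chi_E\|_{L^{\Phi,\infty}}=\sup_{0<t<|E|}\Phi^{-1}(1/t)^{-1}=\lim_{t\to|E|^-}\Phi^{-1}(1/t)^{-1}=\Phi^{-1}(1/|E|)^{-1},
\]
which gives the claimed equality.

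For the lower bound in (1) with $0<q<\infty$, I would restrict the integral defining $\|\chi_E\|_{L^{\Phi,q}}^q$ to the subinterval $(|E|/2,|E|)$. On this interval $\Phi^{-1}(1/t)\le\Phi^{-1}(2/|E|)\le 2\Phi^{-1}(1/|E|)$, so
\[
\int_0^{|E|}\Phi^{-1}(1/t)^{-q}\frac{dt}{t}\ge\int_{|E|/2}^{|E|}(2\Phi^{-1}(1/|E|))^{-q}\frac{dt}{t}=2^{-q}(\log 2)\,\Phi^{-1}(1/|E|)^{-q},
\]
which yields $\|\chi_E\|_{L^{\Phi,q}}\gtrsim\Phi^{-1}(1/|E|)^{-1}$.

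For part (2), I would invoke Lemma \ref{lem:210321-1}: under $\Phi\in\Delta_2$ there exist $q_0\in(0,\infty)$ and $C\ge 1$ with $\tfrac{1}{t}\Phi^{-1}(1/t)^{-q_0}\le\tfrac{C}{s}\Phi^{-1}(1/s)^{-q_0}$ whenever $t\le s$. Setting $s=|E|$ and raising to the power $q/q_0$ yields
\[
\Phi^{-1}(1/t)^{-q}\le (Ct/|E|)^{q/q_0}\,\Phi^{-1}(1/|E|)^{-q}\qquad(0<t\le|E|),
\]
and integration against $dt/t$ (convergent at $0$ because $q/q_0>0$) produces the matching upper bound $\|\chi_E\|_{L^{\Phi,q}}\lesssim\Phi^{-1}(1/|E|)^{-1}$. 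The one subtlety worth highlighting is that the exponent $q_0$ supplied by Lemma \ref{lem:210321-1} depends only on the $\Delta_2$ constant and is in general unrelated to the $q$ given in the proposition, so the rescaling step above is essential; modulo that point, the argument is just elementary integration.
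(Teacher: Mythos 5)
Your proof is correct and essentially mirrors the paper's, with two departures that are worth recording. First, for the $q=\infty$ identity the paper routes through Proposition \ref{prop:weak Orlicz space} and the closed form of $\|\chi_E\|_{L^\Phi}$, whereas you read the supremum directly off $\chi_E^\ast=\chi_{(0,|E|)}$ using monotonicity and continuity of $\Phi^{-1}$ on $(0,\infty)$; this is more elementary and self-contained. For the lower bound in (1) both arguments rest on the same fact (namely that $u\mapsto\Phi^{-1}(u)/u$ is nonincreasing, equivalently $t\mapsto\frac1t\Phi^{-1}(1/t)^{-1}$ is decreasing); the paper applies it over all of $(0,|E|)$ and integrates $t^q\,\frac{{\rm d}t}{t}$, while you localize to $(|E|/2,|E|)$ — a cosmetic difference.

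The more substantive improvement is in part (2). The paper's displayed computation of $\|\chi_E\|_{L^{\Phi,q}}^q$ uses the quasi-monotonicity inequality of Lemma \ref{lem:210321-1} \emph{at the same exponent} $q$ as the Orlicz--Lorentz space, but the lemma only supplies a single exponent $q_0$ (in its proof, $q_0=\log_2 k$) determined by the $\Delta_2$ constant, and there is no reason $q_0$ should equal the $q$ in the proposition. You correctly separate the two parameters, raise the lemma's inequality to the power $q/q_0$ to obtain
\[
\Phi^{-1}(1/t)^{-q}\le (Ct/|E|)^{q/q_0}\Phi^{-1}(1/|E|)^{-q}\qquad(0<t\le|E|),
\]
and then integrate $t^{q/q_0-1}$ over $(0,|E|)$, which converges because $q/q_0>0$. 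This is exactly the rescaling the paper's proof needs but does not spell out, so your version closes that gap cleanly. (For completeness, one can also observe that Lemma \ref{lem:210321-1}'s inequality for $q_0$ automatically propagates to any $q\ge q_0$ with the same constant, since $\Phi^{-1}(u)^{q_0-q}$ is then nonincreasing; but the rescaling step is still needed when $q<q_0$, so your formulation is the right one.)
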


\begin{remark}
Let $\Phi$ be a Young function, and let $0<q<\infty$.
By the convexity of $\Phi$, $L^{\Phi,q}({\mathbb R}^n)\ne\{0\}$ if and only if
\begin{equation*}
\int_0^1\Phi^{-1}\left(\frac1t\right)^{-q}\,\frac{{\rm d}t}t
<\infty,
\end{equation*}
generally.
\begin{itemize}
\item[(1)] If $b(\Phi)<\infty$, then, we have $L^{\Phi,q}({\mathbb R}^n)=\{0\}$.
Indeed, it is suffices to show that for all measurable sets $E\subset{\mathbb R}^n$ with $|E|\ne0$,
\begin{equation*}
\|\chi_E\|_{L^{\Phi,1}}=\infty.
\end{equation*}
By definition, for $u\ge0$,
\begin{equation*}
\Phi^{-1}(u)
=
\inf\{0\le t\le b(\Phi)\,:\,\Phi(t)>u\}
\le b(\Phi)
\end{equation*}
Then,
\begin{align*}
\|\chi_E\|_{L^{\Phi,1}}
=
\int_0^{|E|}\Phi^{-1}\left(\frac1u\right)^{-1}\,\frac{{\rm d}u}u
\ge
\frac1{b(\Phi)}\int_0^{|E|}\frac{{\rm d}u}u
=
\infty.
\end{align*}
\item[(2)] Let 
\begin{equation*}
\Phi(t)=
\begin{cases}
0, & t\le1,\\
t-1, & t>1.
\end{cases}
\end{equation*}
Then, $a(\Phi)>0$ and
\begin{equation*}
\Phi^{-1}(u)
=
\inf\{t>1\,:\,t-1>u\}
=
u+1.
\end{equation*}
Additionally, for each measurable set $E\subset{\mathbb R}^n$,
\begin{align*}
\|\chi_E\|_{L^{\Phi,1}}
=
\int_0^{|E|}\left(\frac1u+1\right)^{-1}\,\frac{{\rm d}u}u
=
\int_0^{|E|}\frac1{1+u}\,{\rm d}u
=
\log(1+|E|)
\hspace{4pt}/\hspace{-9pt}\sim
\Phi^{-1}\left(\frac1{|E|}\right)^{-1}.
\end{align*}
\item[(3)] Taking $\Phi(t)=e^t-1$, $t\ge0$, we have $a(\Phi)=0$ and $b(\Phi)=\infty$.
Especially, $L^{\Phi,1}({\mathbb R}^n)=\{0\}$.
Indeed,
\begin{align*}
\Phi^{-1}(u)
=
\inf\{t\ge0\,:\,e^t-1>u\}
=
\log(1+u),
\end{align*}
and then, for any measurable set $E\subset{\mathbb R}^n$ with $|E|\le\dfrac12$,
\begin{align*}
\|\chi_E\|_{L^{\Phi,1}}
&=
\int_0^{|E|}\frac1{\log\left(1+\dfrac1u\right)}\,\frac{{\rm d}u}u
=
\int_0^{|E|}\frac1{\log(u+1)+\log\dfrac1u}\,\frac{{\rm d}u}u
\ge
\int_0^{|E|}\frac1{2\log\dfrac1u}\,\frac{{\rm d}u}u\\
&=
\infty.
\end{align*}
Meanwhile, $q>1$ implies $L^{\Phi,q}({\mathbb R}^n)\ne\{0\}$.
\end{itemize}
\end{remark}

\begin{proof}[Proof of Proposition \ref{prop:201107}]
(1) $\|\chi_E\|_{L^{\Phi,\infty}}=\Phi^{-1}\left(\dfrac1{|E|}\right)^{-1}$ is obtained by Proposition \ref{prop:weak Orlicz space} and the following calculation:
\begin{align*}
\|\chi_E\|_{L^{\Phi,\infty}}
=
\|\chi_E\|_{{\rm W}L^\Phi}
=
\|\chi_E\|_{L^\Phi}
=
\Phi^{-1}\left(\dfrac1{|E|}\right)^{-1}.
\end{align*}
In addition, since $\dfrac1t\Phi^{-1}\left(\dfrac1t\right)^{-1}$ is deceasing, we compute
\begin{align*}
\|\chi_E\|_{L^{\Phi,q}}
&=
\left(\int_0^{|E|}\Phi^{-1}\left(\frac1t\right)^{-q}\,\frac{{\rm d}t}t\right)^{\frac1q}
\ge
\frac1{|E|}\Phi^{-1}\left(\frac1{|E|}\right)^{-1}
\left(\int_0^{|E|}t^q\,\frac{{\rm d}t}t\right)^{\frac1q}\\
&=
\frac1{q^{\frac1q}}\Phi^{-1}\left(\frac1{|E|}\right)^{-1}
\end{align*}
as desired.

(2) We assume that $\Phi\in\Delta_2$.
By Lemma \ref{lem:210321-1}, there exists $q \in(0,\infty)$ and $C \in [1, \infty)$ such that 
\begin{equation*}
\dfrac1t \Phi^{-1}\left(\dfrac1t\right)^{-q}
\le C
\dfrac1s \Phi^{-1}\left(\dfrac1s\right)^{-q}
\quad \text{for} \quad  t\le s,
\end{equation*}
and then,
\begin{align*}
\|\chi_E\|_{L^{\Phi,q}}^q
&=
\int_0^{|E|}\Phi^{-1}\left(\frac1t\right)^{-q}\,\frac{{\rm d}t}t
\le
C\frac1{|E|}\Phi^{-1}\left(\frac1{|E|}\right)^{-q}\int_0^{|E|}\,{\rm d}t
=
C\Phi^{-1}\left(\frac1{|E|}\right)^{-q}.
\end{align*}
\end{proof}

\section{Proof of Theorem \ref{thm:201006-1}}\label{s3}

Given $T\in L^{\Phi,1}({\mathbb R}^n)^\ast$, we consider the measure $\mu(E)\equiv T\chi_E$.
Since $\mu$ satisfies
\begin{equation*}
|\mu(E)|
\le
\|T\|_{(L^{\Phi,1})^\ast}\|\chi_E\|_{L^{\Phi,1}}
\sim
\|T\|_{(L^{\Phi,1})^\ast}\Phi^{-1}\left(\frac1{|E|}\right)^{-1},
\end{equation*}
it follows that $\mu$ is absolutely continuous with respect to the Lebesgue measure $|\cdot|$.
By the Radon-Nykodym theorem, there exists a unique measurable function $g\in L^1({\mathbb R}^n)$ such that
\begin{equation*}
\mu(E)=\int_Eg(x)\,{\rm d}x
\end{equation*}
for all measurable sets $E\subset{\mathbb R}^n$ with $0<|E|<\infty$.
Therefore, we obtain
\begin{equation*}
T\chi_E=\int_{{\mathbb R}^n}\chi_E(x)g(x)\,{\rm d}x,
\quad \text{$0<|E|<\infty$, $E\subset{\mathbb R}^n$}.
\end{equation*}

In addition, we obtain
\begin{equation}\label{eq:201006-1}
\left|\int_{{\mathbb R}^n}f(x)g(x)\,{\rm d}x\right|
\le
\|T\|_{(L^{\Phi,1})^\ast}\|f\|_{L^{\Phi,1}}.
\end{equation}
In fact, given $f\in L^{\Phi,1}({\mathbb R}^n)$, we take a sequence of non-negative simple functions $\{f_j\}_{j\ge1}$ such that
\begin{equation*}
f_j\uparrow|f|, \quad
\mbox{a.e.}.
\end{equation*}
Then, by the Fatou lemma, we have
\begin{equation*}
\int_{{\mathbb R}^n}|f(x)g(x)|\,{\rm d}x
\le
\liminf_{j\to\infty}\int_{{\mathbb R}^n}|f_j(x)g(x)|\,{\rm d}x,
\quad g\in L^1({\mathbb R}^n).
\end{equation*}
We compute
\begin{align*}
\int_{{\mathbb R}^n}|f_j(x)g(x)|\,{\rm d}x
=
T[f_j\,{\rm sgn}\,g]
\le
\|T\|_{(L^{\Phi,1})^\ast}\|f_j\,{\rm sgn}\,g\|_{L^{\Phi,1}}
\le
\|T\|_{(L^{\Phi,1})^\ast}\|f\|_{L^{\Phi,1}}.
\end{align*}

Note that, for each $t>0$,
\begin{align*}
\left\|\frac{\bar{g}}{|g|}\chi_{\{x\in{\mathbb R}^n\,:\,|g(x)|>t\}}\right\|_{L^{\Phi,1}}
\sim
\Phi^{-1}\left(\frac1{|\{x\in{\mathbb R}^n\,:\,|g(x)|>t\}|}\right)^{-1}
\le
\Phi^{-1}\left(\frac t{\|g\|_{L^1}}\right)^{-1}
<\infty.
\end{align*}
Taking $f=\dfrac{\bar{g}}{|g|}\chi_{\{x\in{\mathbb R}^n\,:\,|g(x)|>t\}}\in L^{\Phi,1}({\mathbb R}^n)$ for each $t>0$ in (\ref{eq:201006-1}), we have
\begin{equation*}
t|\{x\in{\mathbb R}^n\,:\,|g(x)|>t\}|
\le
\|T\|_{(L^{\Phi,1})^\ast}\Phi^{-1}\left(\frac1{|\{x\in{\mathbb R}^n\,:\,|g(x)|>t\}|}\right)^{-1},
\end{equation*}
and, by (\ref{eq:201006-2}), then
\begin{equation*}
t\widetilde{\Phi}^{-1}\left(\frac1{|\{x\in{\mathbb R}^n\,:\,|g(x)|>t\}|}\right)^{-1}
\le
\|T\|_{(L^{\Phi,1})^\ast}.
\end{equation*}
Consequently, take the supremum over $t>0$ to obtain that
\begin{equation*}
\|g\|_{{\rm w}L^{\widetilde{\Phi}}}\le\|T\|_{(L^{\Phi,1})^\ast}.
\end{equation*}

Conversely, using Exercise 1.4.1 (b) in \cite{Grafakos14-1} and equation (\ref{eq:201006-2}), we see that if $f\in L^{\Phi,1}({\mathbb R}^n)$ and $h\in{\rm w}L^{\widetilde{\Phi}}({\mathbb R}^n)$, then
\begin{align*}
\int_{{\mathbb R}^n}|f(x)h(x)|\,{\rm d}x
&\le
\int_0^\infty f^\ast(t)h^\ast(t)\,{\rm d}t
\le
2
\int_0^\infty
\Phi^{-1}\left(\frac1t\right)^{-1}f^\ast(t)
\cdot
\widetilde{\Phi}^{-1}\left(\frac1t\right)^{-1}h^\ast(t)
\,\frac{{\rm d}t}t\\
&\le
2\|f\|_{L^{\Phi,1}}\|h\|_{{\rm w}L^{\widetilde{\Phi}}}.
\end{align*}
Thus every $h\in{\rm w}L^{\widetilde{\Phi}}({\mathbb R}^n)$ gives rise to a bounded linear functional $f\mapsto\int hf\,{\rm d}x$ on $L^{\Phi,1}({\mathbb R}^n)$ with norm at most $\|h\|_{{\rm w}L^{\widetilde{\Phi}}}$.

\section{Generalized Lorentz spaces}\label{s4}

\begin{definition}
Let $0<q\le\infty$, and let $\varphi:{\mathbb R}_+\to{\mathbb R}_+$ be a measurable function.
We define the generalized Lorentz space $\Lambda^{\varphi,q}({\mathbb R}^n)$ by the set of all measurable functions $f$ with the finite quasi-norm
\begin{equation*}
\|f\|_{\Lambda^{\varphi,q}}
\equiv
\begin{cases}
\displaystyle
\left(\int_0^\infty[\varphi(t)f^\ast(t)]^q\,\frac{{\rm d}t}t\right)^{\frac1q}, & q<\infty,\\
\displaystyle\underset{t>0}{\rm ess\,sup}\,\varphi(t)f^\ast(t), & q=\infty.
\end{cases}
\end{equation*}
\end{definition}

\begin{remark}
If $\varphi$ is a non-decreasing function satisfying the doubling condition; there exists $C\ge1$ such that for all $s,t>0$,
\begin{equation*}
\frac1C\le\frac{\varphi(t)}{\varphi(s)}\le C,
\quad \mbox{if} \quad
\frac12\le\frac ts\le2,
\end{equation*}
then, the generalized Lornetz space $L^{\varphi,q}({\mathbb R}^n)$ is vector space.
\end{remark}

\begin{remark}
\begin{itemize}
\item[(1)] If $\varphi$ is a non-increasing function and $q<\infty$, then, $\Lambda^{\varphi,q}({\mathbb R}^n)=\{0\}$.
\item[(2)] If $\varphi(t)=t^{\frac1p}$, then, $\Lambda^{\varphi,q}({\mathbb R}^n)$ is the classical Lorentz space $L^{p,q}({\mathbb R}^n)$.
\item[(3)] If $\varphi(t)=\Phi^{-1}\left(\dfrac1t\right)^{-1}$, then, $\Lambda^{\varphi,q}({\mathbb R}^n)$ is the Orlicz-Lorentz space $L^{\Phi,q}({\mathbb R}^n)$.
\end{itemize}
\end{remark}

\subsection{Boundedness on Hardy-Littlewood maximal operator on generalized Lorentz spaces}

\begin{theorem}[{\cite[Corollary 1.9]{ArMu90}}]\label{cor:ArMu90-1.9}
Let $0<q<\infty$, and let $\varphi:{\mathbb R}_+\to{\mathbb R}_+$ be a measurable function.
Then the Hardy-Littlewood maximal operator $M$ is bounded on $\Lambda^{\varphi,q}({\mathbb R}^n)$ if and if for every $r>0$,
\begin{equation}\label{eq:ArMu90-1.9}
\int_r^\infty\left(\frac{\varphi(t)}t\right)^q\,\frac{{\rm d}t}t
\lesssim
\frac1{r^q}\int_0^r\varphi(t)^q\,\frac{{\rm d}t}t.
\end{equation}
\end{theorem}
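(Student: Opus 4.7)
The plan is to reduce the theorem to the classical Ari\~no--Muckenhoupt characterization of the boundedness of $M$ on weighted Lorentz spaces $\Lambda^q(w)$ with norm $\bigl(\int_0^\infty f^\ast(t)^q w(t)\,{\rm d}t\bigr)^{1/q}$. Observing that
\begin{equation*}
\|f\|_{\Lambda^{\varphi,q}}^q
=
\int_0^\infty f^\ast(t)^q\cdot\frac{\varphi(t)^q}{t}\,{\rm d}t,
\end{equation*}
the space $\Lambda^{\varphi,q}({\mathbb R}^n)$ is exactly $\Lambda^q(w)$ for the weight $w(t)\equiv\varphi(t)^q/t$. The Ari\~no--Muckenhoupt theorem asserts that $M$ is bounded on $\Lambda^q(w)$ if and only if $w\in B_q$, namely
\begin{equation*}
\int_r^\infty\frac{w(t)}{t^q}\,{\rm d}t
\lesssim
\frac1{r^q}\int_0^r w(t)\,{\rm d}t,
\quad r>0.
\end{equation*}
Inserting $w(t)=\varphi(t)^q/t$ into this $B_q$ condition produces precisely (\ref{eq:ArMu90-1.9}), so the statement follows by this change of weight.

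To keep the reduction essentially self-contained, I would outline the two standard ingredients behind the Ari\~no--Muckenhoupt theorem. The first is the Herz-type pointwise equivalence $(Mf)^\ast(t)\sim f^{\ast\ast}(t)=t^{-1}\int_0^t f^\ast(s)\,{\rm d}s$, which reduces the boundedness of $M$ on $\Lambda^q(w)$ to the restricted Hardy-type inequality
\begin{equation*}
\int_0^\infty\biggl(\frac1t\int_0^t g(s)\,{\rm d}s\biggr)^q w(t)\,{\rm d}t
\lesssim
\int_0^\infty g(t)^q w(t)\,{\rm d}t
\end{equation*}
for all nonincreasing $g\ge0$. The second ingredient is the characterization of this inequality: necessity is verified by testing $g=\chi_{(0,r)}$, where the average equals $1$ on $(0,r)$ and $r/t$ on $(r,\infty)$, so the two sides of the Hardy inequality reduce exactly to the two sides of the $B_q$ condition.

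The main obstacle, and the content that is genuinely nontrivial, is the sufficiency direction of the Hardy inequality for nonincreasing functions. This is the substantive part of Ari\~no and Muckenhoupt's original paper; it is typically established by a duality argument combined with a dyadic decomposition along level sets of $g$ and summation of geometric series controlled by the $B_q$ constant. Once this Hardy inequality is in hand, the deduction of Theorem \ref{cor:ArMu90-1.9} from the $B_q$ characterization is purely algebraic through the substitution $w=\varphi^q/t$, and no further analytic content is required.
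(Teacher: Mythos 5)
Your reduction is correct, and it matches what the paper does: the paper states this theorem with a citation to Ari\~no--Muckenhoupt and gives no proof of its own, so the content is precisely the observation you make that $\|f\|_{\Lambda^{\varphi,q}}^q=\int_0^\infty f^\ast(t)^q\,w(t)\,{\rm d}t$ with $w(t)=\varphi(t)^q/t$, turning the space into the classical weighted Lorentz space $\Lambda^q(w)$ and turning (\ref{eq:ArMu90-1.9}) into the $B_q$ condition for that $w$. Your outline of the two ingredients (the Herz equivalence $(Mf)^\ast\sim f^{\ast\ast}$ and the characterization of the restricted Hardy inequality, with necessity obtained by testing $g=\chi_{(0,r)}$) is accurate, and you correctly flag that the genuinely nontrivial sufficiency direction is exactly what is deferred to the cited reference, just as the paper defers it.
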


\begin{lemma}\label{lem:201027-1}
Let $0<q <\infty$, and $\Phi$ be a Young function.
If $\Phi\in\nabla_2$, then $\varphi(t)\equiv\Phi^{-1}\left(\dfrac1t\right)^{-1}$ satisfies the equation (\ref{eq:ArMu90-1.9}).
\end{lemma}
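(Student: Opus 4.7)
The plan is to use Lemma \ref{lem:201107-1}(2) to translate the $\nabla_2$ hypothesis into a multiplicative dilation bound for $\varphi$, and then to estimate both sides of (\ref{eq:ArMu90-1.9}) by dyadic decompositions pivoted at the point $r$, comparing each to the benchmark quantity $\varphi(r)^q/r^q$.

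First, Lemma \ref{lem:201107-1}(2) furnishes $k>1$ with $\Phi^{-1}(2ku)\le k\Phi^{-1}(u)$. Substituting $u=1/s$ and using $\varphi(s)=\Phi^{-1}(1/s)^{-1}$, this is the same as the dilation estimate $\varphi(2ks)\le k\varphi(s)$ for every $s>0$, and iterating gives $\varphi((2k)^j s)\le k^j\varphi(s)$ for all $j\ge 0$. I would also record that $\varphi$ is non-decreasing, since $\Phi^{-1}$ is non-decreasing and $t\mapsto 1/t$ is.

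For the left-hand side I would dyadically split $[r,\infty)=\bigcup_{j\ge0}[(2k)^j r,(2k)^{j+1}r)$. On the $j$-th piece, monotonicity of $\varphi$ and the iterated dilation bound give
\[
\frac{\varphi(t)}{t}\le\frac{\varphi((2k)^{j+1}r)}{(2k)^j r}\le\frac{k^{j+1}\varphi(r)}{(2k)^j r}=\frac{k\cdot 2^{-j}\varphi(r)}{r}.
\]
Raising to the $q$-th power, the $dt/t$-integral over each piece contributes a factor of $\log(2k)$, and the resulting geometric series in $2^{-jq}$ converges because $q>0$, yielding $\int_r^\infty(\varphi(t)/t)^q\,dt/t\lesssim\varphi(r)^q/r^q$.

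For the right-hand side I would need only the single interval $[r/(2k),r]$: monotonicity of $\varphi$ together with $\varphi(r)\le k\varphi(r/(2k))$ yields $\varphi(t)\ge\varphi(r)/k$ on this interval, so $\int_0^r\varphi(t)^q\,dt/t\ge\int_{r/(2k)}^r\varphi(t)^q\,dt/t\ge(\varphi(r)/k)^q\log(2k)$. Dividing by $r^q$ and combining with the previous bound delivers (\ref{eq:ArMu90-1.9}) with an implicit constant depending only on $k$ and $q$. The main obstacle is purely bookkeeping: keeping track of the $\log(2k)$ factors and making sure the constants from the geometric series cancel cleanly; beyond the translation step supplied by Lemma \ref{lem:201107-1}(2), the proof is a straightforward dyadic scaling argument.
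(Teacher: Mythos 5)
Your proposal is correct and follows essentially the same route as the paper: both translate $\nabla_2$ into the dilation bound from Lemma~\ref{lem:201107-1}(2) and then estimate the left-hand side by a dyadic decomposition of $[r,\infty)$ into pieces scaled by powers of $2k$, summing a geometric series. The only (minor, harmless) divergence is in the lower bound for $\int_0^r\varphi(t)^q\,dt/t$: the paper again sums a full dyadic series over $(0,r]$, while you correctly observe that a single interval $[r/(2k),r]$ already suffices, which is a small simplification.
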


\begin{proof}
Let $q<\infty$.

Fix $r>0$.
Using Lemma \ref{lem:201107-1} (2), we calculate
\begin{align*}
\int_r^\infty\left[\frac1t\Phi^{-1}\left(\frac1t\right)^{-1}\right]^q\,\frac{{\rm d}t}t
&=
\int_1^\infty\left[\frac1{rt}\Phi^{-1}\left(\frac1{rt}\right)^{-1}\right]^q\,\frac{{\rm d}t}t
\le
\sum_{j=1}^\infty
\int_{(2k)^{j-1}}^{(2k)^j}
\left[\frac1{rt}\Phi^{-1}\left(\frac1{r(2k)^j}\right)^{-1}\right]^q
\,\frac{{\rm d}t}t\\
&\le
\frac1{r^q}\Phi^{-1}\left(\frac1r\right)^{-q}
\sum_{j=1}^\infty k^{q(j-1)}\cdot\frac1q\left[\frac1{(2k)^{qj}}-\frac1{(2k)^{qj}}\right]
\sim
\frac1{r^q}\Phi^{-1}\left(\frac1r\right)^{-q},
\end{align*}
and
\begin{align*}
\int_0^r\Phi^{-1}\left(\frac1t\right)^{-q}\,\frac{{\rm d}t}t
&=
\int_0^1\Phi^{-1}\left(\frac1{rt}\right)^{-q}\,\frac{{\rm d}t}t
\ge
\sum_{j=1}^\infty
\int_{\frac1{(2k)^j}}^{\frac1{(2k)^{j-1}}}\Phi^{-1}\left(\frac{(2k)^{j-1}}r\right)^{-q}\,\frac{{\rm d}t}t\\
&\ge
\sum_{j=1}^\infty\frac1{k^{q(j-1)}}\Phi^{-1}\left(\frac1r\right)^{-q}
\left(\log\frac1{k^{j-1}}-\log\frac1{k^j}\right)
\sim
\Phi^{-1}\left(\frac1r\right)^{-q}.
\end{align*}
Then we obtain
\begin{equation*}
\left.
\int_r^\infty\left[\frac1t\Phi^{-1}\left(\frac1t\right)^{-1}\right]^q\,\frac{{\rm d}t}t
\right/
\frac1{r^q}\int_0^r\Phi^{-1}\left(\frac1t\right)^{-q}\,\frac{{\rm d}t}t
\lesssim
1.
\end{equation*}
\end{proof}

\begin{theorem}\label{thm:201206-1}
Let $\varphi:{\mathbb R}_+\to{\mathbb R}_+$ be a measurable function.
Then the Hardy-Littlewood maximal operator $M$ is bounded on $\Lambda^{\varphi,\infty}({\mathbb R}^n)$ if and if for every $r>0$,
\begin{equation}\label{eq:201206-2}
\underset{t>0}{\rm ess\,sup}\,\frac{\varphi(t)}t\int_0^t\frac{{\rm d}s}{\underset{0<\tau<s}{\rm ess\,sup}\,\varphi(\tau)}
<\infty.
\end{equation}
\end{theorem}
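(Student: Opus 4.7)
The plan is to reduce the question to a weighted Hardy-type inequality via the classical Herz equivalence
\begin{equation*}
(Mf)^{\ast}(t)\sim\frac{1}{t}\int_{0}^{t}f^{\ast}(s)\,{\rm d}s\qquad(t>0),
\end{equation*}
whose upper bound is due to Bennett--Sharpley and lower bound to Herz. Granted this, boundedness of $M$ on $\Lambda^{\varphi,\infty}(\mathbb{R}^{n})$ is equivalent to the inequality
\begin{equation*}
\operatorname*{ess\,sup}_{t>0}\frac{\varphi(t)}{t}\int_{0}^{t}f^{\ast}(s)\,{\rm d}s\lesssim\operatorname*{ess\,sup}_{t>0}\varphi(t)f^{\ast}(t)
\end{equation*}
for every non-increasing function $f^{\ast}$, so the task is to show that the latter inequality is equivalent to \eqref{eq:201206-2}.

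For the sufficiency direction, I would set $\widetilde{\varphi}(s):=\operatorname*{ess\,sup}_{0<\tau<s}\varphi(\tau)$, which is non-decreasing, and exploit the fact that if $K:=\|f\|_{\Lambda^{\varphi,\infty}}$, then for almost every $\tau\in(0,s)$ the monotonicity of $f^{\ast}$ forces $\varphi(\tau)f^{\ast}(s)\le\varphi(\tau)f^{\ast}(\tau)\le K$; taking the essential supremum in $\tau$ yields the pointwise bound $f^{\ast}(s)\le K/\widetilde{\varphi}(s)$ for a.e.\ $s$. Substituting into $\frac{\varphi(t)}{t}\int_{0}^{t}f^{\ast}(s)\,{\rm d}s$ and invoking hypothesis \eqref{eq:201206-2} immediately gives $\|Mf\|_{\Lambda^{\varphi,\infty}}\lesssim K$.

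For the necessity direction, I would test with functions whose rearrangements saturate the preceding estimate. Since $\widetilde{\varphi}$ is non-decreasing, the function $h_{R}(s):=\frac{1}{\widetilde{\varphi}(s)}\chi_{(0,R)}(s)$ is non-increasing and right-continuous on $(0,\infty)$, so it is the decreasing rearrangement of some radial $f_{R}\in L^{0}(\mathbb{R}^{n})$. One checks $\|f_{R}\|_{\Lambda^{\varphi,\infty}}\lesssim 1$ using the a.e.\ inequality $\varphi(s)\le\widetilde{\varphi}(s)$ in its essential-sup interpretation. The assumed boundedness of $M$ together with the Herz lower bound then gives
\begin{equation*}
\frac{\varphi(t)}{t}\int_{0}^{\min(t,R)}\frac{{\rm d}s}{\widetilde{\varphi}(s)}\lesssim\|Mf_{R}\|_{\Lambda^{\varphi,\infty}}\lesssim 1
\end{equation*}
for a.e.\ $t$, and letting $R\to\infty$ produces \eqref{eq:201206-2}.

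The principal obstacle I anticipate is the bookkeeping with essential suprema when $\varphi$ is neither monotone nor continuous: the inequality $\varphi(s)\le\widetilde{\varphi}(s)$ can fail on a null set, so every norm computation must be performed in the essential-sup sense and the test function chosen carefully so that no spurious supremum is picked up on a negligible set. A secondary technical issue is justifying that a prescribed non-increasing right-continuous function is realized as a decreasing rearrangement, which is standard via a radially symmetric construction.
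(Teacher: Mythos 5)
Your proposal follows the same overall reduction the paper uses: the Herz equivalence $(Mf)^{\ast}(t)\sim t^{-1}\int_0^t f^{\ast}(s)\,{\rm d}s$ (this is precisely the content of the paper's Lemma \ref{lem:201206-2}, via Lemma \ref{lem:210314-1}) converts boundedness of $M$ on $\Lambda^{\varphi,\infty}$ into the Hardy-type inequality
\begin{equation*}
\underset{t>0}{\operatorname{ess\,sup}}\,\frac{\varphi(t)}{t}\int_0^t g(s)\,{\rm d}s
\lesssim
\underset{t>0}{\operatorname{ess\,sup}}\,\varphi(t)g(t)
\qquad\text{for non-increasing }g.
\end{equation*}
The genuine divergence is in the second half: the paper disposes of the Hardy step by citing Lemma \ref{lem:201206-1}, i.e.\ Theorem 4.7 of \cite{KGS14}, as a black box, whereas you give a direct, self-contained proof of that equivalence. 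Your sufficiency argument is correct: from $K=\operatorname{ess\,sup}\varphi g$ and monotonicity of $g$ you get $\varphi(\tau)g(s)\le K$ for a.e.\ $\tau\in(0,s)$, hence $g(s)\le K/\widetilde{\varphi}(s)$ with $\widetilde{\varphi}(s)=\operatorname{ess\,sup}_{(0,s)}\varphi$, and condition \eqref{eq:201206-2} closes the estimate. Your necessity argument (testing with $h_R=\widetilde{\varphi}^{-1}\chi_{(0,R)}$) is also sound, and this is a nicer, more elementary route than invoking \cite{KGS14}.

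Two technical points you flagged deserve a word of reassurance. First, the a.e.\ inequality $\varphi(s)\le\widetilde{\varphi}(s)$ does hold even though the essential supremum is over the \emph{open} interval $(0,s)$: for each rational $\lambda$ the set $\{s:\varphi(s)>\lambda\ge\widetilde{\varphi}(s)\}$ is contained (up to a null set) in $\{s\le a_\lambda\}$ where $a_\lambda=\inf\{s:|\{t<s:\varphi(t)>\lambda\}|>0\}$, and $\{s\le a_\lambda:\varphi(s)>\lambda\}$ is null by definition of $a_\lambda$; the countable union over rational $\lambda$ is then null. Second, $h_R$ need not be right-continuous (it is in fact left-continuous, being the reciprocal of the left-continuous non-decreasing $\widetilde{\varphi}$), but a non-increasing function has at most countably many discontinuities, so replacing $h_R$ by its right-continuous modification changes nothing in any integral or essential supremum, and then the paper's Lemma \ref{lem:210314-1} (or your radial construction) applies. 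With those two remarks filled in, your argument is complete and constitutes a valid alternative to the paper's citation of \cite{KGS14}.
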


To prove this theorem, we may use the following lemmas.

\begin{lemma}[{\cite[Theorem 4.7]{KGS14}}]\label{lem:201206-1}
Let $v,w:{\mathbb R}_+\to{\mathbb R}_+$ be measurable functions.
Then the inequality
\begin{equation*}
\underset{t>0}{\rm ess\,sup}\,w(t)\int_0^tg(s)\,{\rm d}s
\lesssim
\underset{t>0}{\rm ess\,sup}\,v(t)g(t)
\end{equation*}
holds for all non-negative and non-increasing $g$ on $(0,\infty)$ if and only if
\begin{equation*}
\underset{t>0}{\rm ess\,sup}\,w(t)\int_0^t\frac{{\rm d}s}{\underset{0<\tau<s}{\rm ess\,sup}\,v(\tau)}
<\infty.
\end{equation*}
\end{lemma}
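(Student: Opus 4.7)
My plan is to reduce the boundedness of $M$ on $\Lambda^{\varphi,\infty}({\mathbb R}^n)$ to a weighted Hardy-type supremum inequality for non-increasing functions, and then quote Lemma \ref{lem:201206-1} to translate that inequality into the pointwise condition on $\varphi$.

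First, I would invoke the classical Riesz--Herz equivalence
\begin{equation*}
(Mf)^\ast(t)\sim f^{\ast\ast}(t)\equiv\frac1t\int_0^t f^\ast(s)\,{\rm d}s,\qquad t>0,
\end{equation*}
which holds for every measurable $f$ on ${\mathbb R}^n$ with implicit constants depending only on $n$. Consequently,
\begin{equation*}
\|Mf\|_{\Lambda^{\varphi,\infty}}
=\underset{t>0}{\rm ess\,sup}\,\varphi(t)(Mf)^\ast(t)
\sim
\underset{t>0}{\rm ess\,sup}\,\frac{\varphi(t)}t\int_0^t f^\ast(s)\,{\rm d}s,
\end{equation*}
so $M$ is bounded on $\Lambda^{\varphi,\infty}({\mathbb R}^n)$ if and only if
\begin{equation*}
\underset{t>0}{\rm ess\,sup}\,\frac{\varphi(t)}t\int_0^t f^\ast(s)\,{\rm d}s
\lesssim
\underset{t>0}{\rm ess\,sup}\,\varphi(t)f^\ast(t)
\end{equation*}
for every measurable $f$, equivalently (since $f^\ast$ can be an arbitrary non-negative non-increasing function on $(0,\infty)$, realized for example by the radial rearrangement $f(x)\equiv f^\ast(c_n|x|^n)$) for every non-negative non-increasing $g$ on $(0,\infty)$ in place of $f^\ast$.

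At this point, I apply Lemma \ref{lem:201206-1} with the choices $w(t)\equiv\varphi(t)/t$ and $v(t)\equiv\varphi(t)$. The lemma says precisely that the supremum-type Hardy inequality above for all non-negative non-increasing $g$ is equivalent to
\begin{equation*}
\underset{t>0}{\rm ess\,sup}\,\frac{\varphi(t)}t\int_0^t\frac{{\rm d}s}{\underset{0<\tau<s}{\rm ess\,sup}\,\varphi(\tau)}<\infty,
\end{equation*}
which is exactly the condition in the statement. Combining the two equivalences yields the theorem.

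The main obstacle I anticipate is the necessity direction: one must argue that the Hardy-type inequality for all non-negative non-increasing $g$ is not merely implied by, but is actually equivalent to, the boundedness of $M$ on $\Lambda^{\varphi,\infty}$. The sufficiency side follows immediately from $(Mf)^\ast\lesssim f^{\ast\ast}$; for necessity one needs the opposite bound $f^{\ast\ast}\lesssim (Mf)^\ast$ and the fact that every non-negative non-increasing function on $(0,\infty)$ occurs as the decreasing rearrangement of some measurable function on ${\mathbb R}^n$. I would handle the latter by the explicit radial construction mentioned above, and the former by the standard estimate $f^{\ast\ast}(t)\le C_n(Mf)^\ast(t)$ obtained from a covering-type argument on level sets. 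Once both directions of the Riesz--Herz equivalence are in hand, Lemma \ref{lem:201206-1} supplies the rest with no further work.
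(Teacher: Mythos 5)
Your proposal does not prove the statement at hand. The statement is Lemma \ref{lem:201206-1} itself: the two-weight Hardy-type supremum inequality $\underset{t>0}{\rm ess\,sup}\,w(t)\int_0^tg(s)\,{\rm d}s\lesssim\underset{t>0}{\rm ess\,sup}\,v(t)g(t)$ for all non-negative non-increasing $g$ holds if and only if $\underset{t>0}{\rm ess\,sup}\,w(t)\int_0^t\bigl(\underset{0<\tau<s}{\rm ess\,sup}\,v(\tau)\bigr)^{-1}{\rm d}s<\infty$. What you wrote is instead a proof of Theorem \ref{thm:201206-1} (the characterization of boundedness of $M$ on $\Lambda^{\varphi,\infty}$), reproducing in substance the paper's Lemma \ref{lem:201206-2} and Lemma \ref{lem:210314-1} (the reduction via $(Mf)^\ast\sim\frac1t\int_0^tf^\ast$ and the radial realization of a non-increasing function as a rearrangement), and at the decisive step you ``apply Lemma \ref{lem:201206-1}'' --- that is, you quote the very result you were asked to establish. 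Relative to the task this is circular: nothing in the proposal constitutes a proof of the lemma, which in the paper is an external input cited from \cite[Theorem 4.7]{KGS14} without proof.

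For the record, a direct proof is short. Set $V(s)\equiv\underset{0<\tau<s}{\rm ess\,sup}\,v(\tau)$, a non-decreasing function. \emph{Sufficiency of the finiteness condition:} if $g\ge0$ is non-increasing and $A\equiv\underset{t>0}{\rm ess\,sup}\,v(t)g(t)$, then for a.e.\ $\tau\in(0,s)$ one has $v(\tau)g(s)\le v(\tau)g(\tau)\le A$, so taking the essential supremum over $\tau\in(0,s)$ gives $V(s)g(s)\le A$, i.e.\ $g(s)\le A\,V(s)^{-1}$ for every $s$; integrating and multiplying by $w(t)$ yields $w(t)\int_0^tg\le A\,w(t)\int_0^tV(s)^{-1}{\rm d}s$, which is bounded by hypothesis. \emph{Necessity:} test the inequality with $g=1/V$ (or with the truncations $g_N=\min(N,1/V)$ to avoid degeneracies where $V$ vanishes), which is non-negative and non-increasing; since $V$ is monotone it has at most countably many discontinuities, and for a.e.\ $t$ one has $v(t)\le V(s)$ for all $s>t$, hence $v(t)g(t)\le1$ a.e., so $\underset{t>0}{\rm ess\,sup}\,v(t)g(t)\le1$ and the assumed inequality delivers exactly $\underset{t>0}{\rm ess\,sup}\,w(t)\int_0^tV(s)^{-1}{\rm d}s\lesssim1$. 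Some argument of this kind (or an explicit reference-checked transcription of the proof in \cite{KGS14}) is what your write-up is missing.
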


\begin{lemma}\label{lem:201206-2}
Let $\varphi:{\mathbb R}_+\to{\mathbb R}_+$ be a measurable function.
Then the Hardy-Littlewood maximal operator $M$ is bounded on $\Lambda^{\varphi,\infty}({\mathbb R}^n)$ if and if for all non-negative and non-increasing $g$ on ${\mathbb R}_+$,
\begin{equation}\label{eq:201206-1}
\underset{t>0}{\rm ess\,sup}\,\frac{\varphi(t)}t\int_0^tg(s)\,{\rm d}s
\lesssim
\underset{t>0}{\rm ess\,sup}\,\varphi(t)g(t).
\end{equation}
\end{lemma}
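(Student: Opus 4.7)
The plan is to reduce the boundedness statement to the integral inequality (\ref{eq:201206-1}) by means of the classical Herz--Bennett--Sharpley equivalence
\[
(Mf)^\ast(t)\sim\frac1t\int_0^tf^\ast(s)\,{\rm d}s,
\]
valid for every measurable $f$ on ${\mathbb R}^n$. The upper bound $(Mf)^\ast(t)\lesssim\frac1t\int_0^tf^\ast(s)\,{\rm d}s$ is Herz's classical inequality, while the matching lower bound is obtained via a Calder\'on--Zygmund-type covering argument producing a set of measure comparable to $t$ on which $Mf$ already dominates this average. Both estimates are standard and may be found in Bennett--Sharpley's monograph on interpolation of operators.

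Granting this equivalence, the proof is essentially a translation between equivalent formulations. On one hand, by definition,
\[
\|f\|_{\Lambda^{\varphi,\infty}}=\underset{t>0}{\rm ess\,sup}\,\varphi(t)f^\ast(t);
\]
on the other hand, the above equivalence yields
\[
\|Mf\|_{\Lambda^{\varphi,\infty}}=\underset{t>0}{\rm ess\,sup}\,\varphi(t)(Mf)^\ast(t)\sim\underset{t>0}{\rm ess\,sup}\,\frac{\varphi(t)}t\int_0^tf^\ast(s)\,{\rm d}s.
\]
Therefore the boundedness of $M$ on $\Lambda^{\varphi,\infty}({\mathbb R}^n)$ is equivalent to the assertion that (\ref{eq:201206-1}) holds with $g=f^\ast$, as $f$ ranges over the measurable functions on ${\mathbb R}^n$.

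To close the loop, I would observe that $f\mapsto f^\ast$ is a surjection onto the class of non-negative non-increasing right-continuous functions on $(0,\infty)$: for any such $g$, the radially decreasing function $f(x)\equiv g(\omega_n|x|^n)$, with $\omega_n$ the volume of the unit ball in ${\mathbb R}^n$, satisfies $f^\ast=g$; conversely every $f^\ast$ is non-negative and non-increasing. Coupling this with the preceding paragraph shows that the validity of (\ref{eq:201206-1}) for \emph{all} non-negative non-increasing $g$ on ${\mathbb R}_+$ is precisely equivalent to the boundedness of $M$ on $\Lambda^{\varphi,\infty}({\mathbb R}^n)$.

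The only genuinely nontrivial input is the lower bound $(Mf)^\ast(t)\gtrsim\frac1t\int_0^tf^\ast(s)\,{\rm d}s$, which is the main obstacle; once it is invoked, both directions of the equivalence follow by direct substitution, and no further difficulty arises.
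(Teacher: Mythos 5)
Your proposal is correct and follows essentially the same route as the paper: both rest on the classical equivalence $(Mf)^\ast(t)\sim\frac1t\int_0^tf^\ast(s)\,{\rm d}s$ (which the paper also cites rather than proves) and both realize an arbitrary non-increasing $g$ as a decreasing rearrangement via the radial function $x\mapsto g(\nu_n|x|^n)$, exactly as in the paper's Lemma \ref{lem:210314-1}. The only cosmetic difference is that the paper states the surjectivity step for right-continuous $g$ only, a restriction that is harmless since modifying $g$ on a countable set changes neither side of (\ref{eq:201206-1}).
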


To prove this lemma, we use the following lemma:

\begin{lemma}\label{lem:210314-1}
Let $g:(0,\infty)\to(0,\infty)$ be a right-continuous non-increasing function.
Then, taking $f(x)\equiv g(\nu_n|x|^n)$ for $x\in{\mathbb R}^n$, where $\nu_n$ be a volume of the $n$-dimensional unit ball, we have $f^\ast(t)=g(t)$ for $t>0$.
\end{lemma}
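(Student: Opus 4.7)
The plan is a direct computation of the distribution function $m(f,\alpha)$ and the resulting rearrangement $f^\ast(t)$, taking advantage of the fact that $f$ is radial and radially non-increasing, so the super-level sets of $f$ are balls whose Lebesgue measures can be read off from the super-level sets of $g$.

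First I would fix $\alpha>0$ and analyze the set $\{s>0:g(s)>\alpha\}$. Since $g$ is non-increasing, this set is of the form $(0,s_\alpha)$ or $(0,s_\alpha]$, where
\begin{equation*}
s_\alpha\equiv\sup\{s>0:g(s)>\alpha\}\in[0,\infty].
\end{equation*}
Right-continuity rules out the closed endpoint: if $g(s_\alpha)>\alpha$ held, then $g(s_\alpha+\varepsilon)>\alpha$ for all sufficiently small $\varepsilon>0$ by right-continuity, contradicting the definition of $s_\alpha$. Hence $\{s>0:g(s)>\alpha\}=(0,s_\alpha)$.

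Next I would translate this to $f$. Since $f(x)=g(\nu_n|x|^n)$, the super-level set is
\begin{equation*}
\{x\in{\mathbb R}^n:f(x)>\alpha\}=\{x\in{\mathbb R}^n:\nu_n|x|^n<s_\alpha\}=B\bigl(0,(s_\alpha/\nu_n)^{1/n}\bigr),
\end{equation*}
whose Lebesgue measure is $\nu_n\cdot(s_\alpha/\nu_n)=s_\alpha$. Therefore $m(f,\alpha)=s_\alpha$.

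Finally I would compute
\begin{equation*}
f^\ast(t)=\inf\{\alpha>0:m(f,\alpha)\le t\}=\inf\{\alpha>0:s_\alpha\le t\}.
\end{equation*}
The condition $s_\alpha\le t$ is equivalent to $g(s)\le\alpha$ for every $s>t$, which, by right-continuity of $g$ at $t$, is equivalent to $g(t)=\lim_{s\to t+}g(s)\le\alpha$. Hence $\{\alpha>0:s_\alpha\le t\}=[g(t),\infty)$ (intersected with $(0,\infty)$), and its infimum is $g(t)$, which is the desired identity.

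The main subtlety — the only real step beyond bookkeeping — is the correct use of right-continuity at both ends: once to exclude the right endpoint of the super-level set of $g$ (so that the ball for $f$ has the right measure), and once more to identify $\sup_{s>t}g(s)$ with $g(t)$ when evaluating the infimum. Handling the edge cases $s_\alpha=0$ and $s_\alpha=\infty$ (where the ball is empty or all of ${\mathbb R}^n$) fits into the same framework without change.
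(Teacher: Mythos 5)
Your proof is correct and follows essentially the same route as the paper: compute $m(f,\alpha)=\sup\{s>0:g(s)>\alpha\}$ and then identify the infimum defining $f^\ast(t)$. The only cosmetic difference is that you package the final step as a clean equivalence, $s_\alpha\le t\iff g(t)\le\alpha$, rather than splitting it into the two one-sided inequalities $f^\ast(t)\le g(t)$ and $f^\ast(t)\ge g(t)-\varepsilon$ as the paper does; both use right-continuity at exactly the same spot. One small remark: your observation that right-continuity forces the super-level set of $g$ to be open (excluding $s_\alpha$) is true, but it is not actually needed to conclude $m(f,\alpha)=s_\alpha$, since the sphere $\{|x|=(s_\alpha/\nu_n)^{1/n}\}$ has Lebesgue measure zero either way.
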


\begin{proof}
For any $\lambda>0$,
\begin{align*}
m(f,\lambda)
&=
|\{x\in{\mathbb R}^n\,:\,g(\nu_n|x|^n)>\lambda\}|
=
\nu_n\sup\{s>0\,:\,g(\nu_ns^n)>\lambda\}^n\\
&=
\sup\{s>0\,:\,g(s)>\lambda\}.
\end{align*}
Then, we note that
\begin{equation*}
f^\ast(t)
=
\inf\{\lambda>0\,:\,\sup\{s>0\,:\,g(s)>\lambda\}\le t\}.
\end{equation*}

Fix $t>0$.
By the non-increasingly of $g$,
\begin{equation*}
\sup\{s>0\,:\,g(s)>g(t)\}\le t,
\end{equation*}
and then, $f^\ast(t)\le g(t)$.
Meanwhile, by the non-increasingly and right-continuity of $g$, for all sufficiently small number $\varepsilon>0$, there exists $t_0>0$ such that $g(t_0)\ge g(t)-\varepsilon$.
It follows that
\begin{align*}
\sup\{s>0\,:\,g(s)>g(t)-\varepsilon\}\ge t_0>t,
\end{align*}
and therefore, $f^\ast(t)\ge g(t)-\varepsilon$.
This is the desired result.
\end{proof}

\begin{proof}[Proof of Lemma \ref{lem:201206-2}]
Similar to show this lemma by the proof of Lemma 4.1 in \cite{ArMu90}, and we use the fact that
\begin{equation*}
(Mf)^\ast(t)
\sim
\frac1t\int_0^tf^\ast(s)\,{\rm d}s,
\quad t>0,
\end{equation*}
for measurable function $f$ (see for example \cite[p. 306]{Zygmund59}).

We assume that the Hardy-Littlewood maximal operator $M$ is bounded on $\Lambda^{\varphi,\infty}({\mathbb R}^n)$.
Fix a non-negative non-increasing function $g$ on ${\mathbb R}_+$ and define $f\in L^0({\mathbb R}^n)$ by $f(x)=g(\nu_n|x|^n)$.
Then by Lemma \ref{lem:210314-1},
\begin{equation*}
f^\ast(t)=g(t),
\end{equation*}
and then we can verify the equation (\ref{eq:201206-1}), immediately.

Conversely, taking non-increasing $g$ in (\ref{eq:201206-1}) by $f^\ast$, we have
\begin{equation*}
\underset{t>0}{\rm ess\,sup}\,\varphi(t)f^\ast(t)
\gtrsim
\underset{t>0}{\rm ess\,sup}\,\frac{\varphi(t)}t\int_0^tf^\ast(s)\,{\rm d}s
\sim
\underset{t>0}{\rm ess\,sup}\,\varphi(t)(Mf)^\ast(t)
\end{equation*}
for all $f\in\Lambda^{\varphi,\infty}({\mathbb R}^n)$.
This is the desired result.
\end{proof}

\begin{lemma}\label{lem:210311-1}
Let $\Phi:[0,\infty]\to[0,\infty]$ be a Young function in $\nabla_2$.
Then $\varphi(t)=\Phi\left(\dfrac1t\right)^{-1}$, $t>0$, satisfies the equation (\ref{eq:201206-2}).
\end{lemma}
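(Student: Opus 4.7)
The plan is to reduce (\ref{eq:201206-2}) to a simple one-variable estimate by using the monotonicity of $\varphi$, and then to dominate the resulting integral by a geometric-series argument built on the $\nabla_2$ characterization of Lemma \ref{lem:201107-1} (2).

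First I would note that, since $\Phi^{-1}$ is non-decreasing and $t \mapsto 1/t$ is decreasing, the function $\varphi(t) = \Phi^{-1}(1/t)^{-1}$ is non-decreasing on $(0,\infty)$. Therefore, for a.e.\ $s>0$, $\underset{0<\tau<s}{\mathrm{ess\,sup}}\,\varphi(\tau) = \varphi(s)$, and condition (\ref{eq:201206-2}) collapses to
\begin{equation*}
\sup_{t>0}\frac{\varphi(t)}{t}\int_0^t\frac{{\rm d}s}{\varphi(s)}
=
\sup_{t>0}\frac{1}{t\,\Phi^{-1}(1/t)}\int_0^t\Phi^{-1}\!\left(\frac{1}{s}\right){\rm d}s
<\infty.
\end{equation*}

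Next, by Lemma \ref{lem:201107-1} (2), $\Phi\in\nabla_2$ yields a constant $k>1$ such that, setting $K\equiv 2k$, one has $\Phi^{-1}(Ku)\le k\,\Phi^{-1}(u)$ for all $u\ge 0$. Iterating this gives $\Phi^{-1}(K^{j+1}u)\le k^{j+1}\Phi^{-1}(u)$ for every integer $j\ge 0$. I would then decompose the integral geometrically: write $(0,t]=\bigcup_{j\ge 0}(t/K^{j+1},t/K^j]$. On each piece, $1/s\le K^{j+1}/t$, so
\begin{equation*}
\Phi^{-1}\!\left(\frac{1}{s}\right)\le \Phi^{-1}\!\left(\frac{K^{j+1}}{t}\right)\le k^{j+1}\,\Phi^{-1}\!\left(\frac{1}{t}\right),
\end{equation*}
and the interval has length $(1-1/K)\,t/K^{j}$. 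Contributing these to the integral and using $k/K=1/2$,
\begin{equation*}
\int_0^t\Phi^{-1}\!\left(\frac{1}{s}\right){\rm d}s
\le
t\,\Phi^{-1}\!\left(\frac{1}{t}\right)\,k\!\left(1-\frac{1}{K}\right)\sum_{j=0}^\infty\!\left(\frac{k}{K}\right)^{j}
=
C\,t\,\Phi^{-1}\!\left(\frac{1}{t}\right).
\end{equation*}
Dividing through by $t\,\Phi^{-1}(1/t)$ gives a bound uniform in $t>0$, which is exactly (\ref{eq:201206-2}).

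The only nontrivial step is the geometric decomposition combined with the iterated inequality $\Phi^{-1}(K^{j+1}u)\le k^{j+1}\Phi^{-1}(u)$; the summability of the resulting series is ensured precisely because $K=2k$ forces $k/K=1/2<1$. Everything else (reduction of the essential supremum to a supremum, rewriting the integral) is formal once the monotonicity of $\varphi$ is noted.
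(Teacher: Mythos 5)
Your proposal is correct and takes essentially the same approach as the paper: reduce the essential supremum using the monotonicity of $\varphi$, decompose $(0,t]$ dyadically with ratio $2k$ coming from Lemma~\ref{lem:201107-1}~(2), and sum a geometric series with ratio $k/(2k)=1/2$. The only cosmetic differences are that you work directly on $(0,t]$ whereas the paper first rescales to $(0,1]$, and your indexing of the iterated bound $\Phi^{-1}(K^{j+1}u)\le k^{j+1}\Phi^{-1}(u)$ is a bit more careful than the paper's.
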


\begin{proof}
Fix $t>0$.
Using Lemma \ref{lem:201107-1} (2), we calculate
\begin{align*}
\frac1t\Phi^{-1}\left(\frac1t\right)^{-1}
\int_0^t
\frac{{\rm d}s}{\underset{0<\tau<s}{\rm ess\,sup}\,\Phi^{-1}\left(\dfrac1\tau\right)^{-1}}
&=
\frac1t\Phi^{-1}\left(\frac1t\right)^{-1}
\int_0^t\Phi^{-1}\left(\frac1s\right)\,{\rm d}s\\
&=
\Phi^{-1}\left(\frac1t\right)^{-1}
\int_0^1\Phi^{-1}\left(\frac1{ts}\right)\,{\rm d}s\\
&\le
\Phi^{-1}\left(\frac1t\right)^{-1}
\sum_{j=1}^\infty\int_{\frac1{(2k)^j}}^{\frac1{(2k)^{j-1}}}
\Phi^{-1}\left(\frac{(2k)^{j-1}}t\right)\,{\rm d}s\\
&\le
\Phi^{-1}\left(\frac1t\right)^{-1}
\sum_{j=1}^\infty
\left(\frac1{(2k)^{j-1}}-\frac1{(2k)^j}\right)k^{j-1}\Phi^{-1}\left(\frac1t\right)\\
&\lesssim
1.
\end{align*}
\end{proof}

\begin{theorem}\label{prop:201027-1}
Let $0<q\le \infty$, and let $\Phi$ be a Young function in $\nabla_2$.
Then the Hardy-Littlewood maximal operator $M$ is bounded on $L^{\Phi,q}({\mathbb R}^n)$.
In particular, the Hardy-Littlewood maximal operator $M$ is bounded on ${\rm w}L^\Phi({\mathbb R}^n)$.
\end{theorem}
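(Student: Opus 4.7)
The plan is to read this theorem as the direct synthesis of the two characterizations of maximal-operator boundedness on generalized Lorentz spaces already established in Section \ref{s4}, applied to the weight $\varphi(t)=\Phi^{-1}(1/t)^{-1}$. By Remark 4.3(3), we have $L^{\Phi,q}(\mathbb R^n)=\Lambda^{\varphi,q}(\mathbb R^n)$ for this choice of $\varphi$, so the task reduces to checking that $\varphi$ satisfies the relevant structural inequality in each of the two regimes $q<\infty$ and $q=\infty$.

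First I would treat the case $0<q<\infty$. Here Theorem \ref{cor:ArMu90-1.9} tells us that $M$ is bounded on $\Lambda^{\varphi,q}(\mathbb R^n)$ if and only if $\varphi$ satisfies (\ref{eq:ArMu90-1.9}). But exactly this implication, under the hypothesis $\Phi\in\nabla_2$, is the content of Lemma \ref{lem:201027-1}. Thus invoking these two results in succession yields boundedness of $M$ on $L^{\Phi,q}(\mathbb R^n)$ for all finite $q$.

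Next I would treat $q=\infty$ by the same template, only swapping the pair of results. Theorem \ref{thm:201206-1} characterizes boundedness of $M$ on $\Lambda^{\varphi,\infty}(\mathbb R^n)$ via condition (\ref{eq:201206-2}), and Lemma \ref{lem:210311-1} verifies that our $\varphi(t)=\Phi^{-1}(1/t)^{-1}$ indeed satisfies (\ref{eq:201206-2}) whenever $\Phi\in\nabla_2$. This settles the case $q=\infty$, and hence the whole first statement of the theorem.

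The "In particular" clause is then immediate: Proposition \ref{prop:weak Orlicz space} identifies ${\rm w}L^\Phi(\mathbb R^n)$ with $L^{\Phi,\infty}(\mathbb R^n)$, so the $q=\infty$ case specializes exactly to boundedness of $M$ on ${\rm w}L^\Phi(\mathbb R^n)$. Since all the genuine technical work — translating $\nabla_2$ into an inverse-function inequality via Lemma \ref{lem:201107-1}(2), and running the dyadic sum estimates — has already been done in Lemmas \ref{lem:201027-1} and \ref{lem:210311-1}, there is no serious obstacle here; the only thing to be careful about is invoking the correct characterization in each regime ($q<\infty$ versus $q=\infty$) and noting the norm identification from Section 2 for the final sentence.
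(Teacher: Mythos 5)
Your proposal is correct and follows exactly the route the paper intends: the theorem is the direct synthesis of Theorem \ref{cor:ArMu90-1.9} with Lemma \ref{lem:201027-1} for $0<q<\infty$, of Theorem \ref{thm:201206-1} with Lemma \ref{lem:210311-1} for $q=\infty$, and of the identification ${\rm w}L^\Phi({\mathbb R}^n)=L^{\Phi,\infty}({\mathbb R}^n)$ from Proposition \ref{prop:weak Orlicz space} for the final clause. No gaps.
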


\section{Proof of Theorem \ref{thm:201007-1}}\label{s5}

\begin{definition}
Let $\Phi:[0,\infty]\to[0,\infty]$ be a Young function. We define
\begin{align*}
	p_+=p_+(\Phi)&:=\inf \left\{1\leq p\leq\infty\,\mid\,\Phi(\lambda r)\leq \lambda^p\Phi(r)\ \ \text{for}\ r\geq0,\ \lambda>1\right\}\\
	p_-=p_-(\Phi)&:=\sup \left\{1\leq p\leq\infty\,\mid\,\Phi(\lambda r)\leq \lambda^p\Phi(r)\ \ \text{for}\ r\geq0,\ 0<\lambda < 1\right\}.
\end {align*}
\end{definition}

\begin{lemma}\label{lem:210131-1}
Let $\Phi$ be a Young function.
Then we have
\begin{itemize}
\item[{\rm (1)}]
If $p_+<\infty$, then we have
$\tilde{p}_-:=p_-(\widetilde{\Phi})\geq  p_+'$,
\item[{\rm (2)}] 
If $p_->1$, then we have
$\tilde{p}_+:=p_+(\widetilde{\Phi})\leq  p_-'$.
\end{itemize}
\end{lemma}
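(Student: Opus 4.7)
The plan is to transport a power-type growth estimate on $\Phi$ into one for $\widetilde{\Phi}$ via the Legendre-transform definition $\widetilde{\Phi}(r) = \sup_{s \ge 0}(rs - \Phi(s))$, by performing a scaling substitution inside the supremum. The key observation is that for any $p > 1$ the exponent $\alpha := 1/(p-1)$ satisfies $1 + \alpha = \alpha p = p'$, so under the substitution $s = \lambda^{\alpha} t$ both the linear term and (after applying the growth bound) the $\Phi$-term in $\lambda r s - \Phi(s)$ pick up exactly the factor $\lambda^{p'}$, which can then be pulled out of the supremum.

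For part (1), I fix any $p > p_+$, so that $\Phi(\lambda r) \le \lambda^p \Phi(r)$ for all $r \ge 0$ and $\lambda > 1$; via $\lambda \mapsto 1/\mu$ this is equivalent to $\Phi(\mu t) \ge \mu^p \Phi(t)$ for $0 < \mu < 1$. Given $0 < \mu < 1$ and $r \ge 0$, the substitution $s = \mu^{\alpha} t$ in $\widetilde{\Phi}(\mu r)$ together with the just-derived lower bound on $\Phi(\mu^\alpha t)$ (valid since $0 < \mu^\alpha < 1$) gives
\[
\widetilde{\Phi}(\mu r) = \sup_{t \ge 0}\bigl(\mu^{1+\alpha} r t - \Phi(\mu^{\alpha} t)\bigr) \le \sup_{t \ge 0}\bigl(\mu^{p'} rt - \mu^{p'}\Phi(t)\bigr) = \mu^{p'}\widetilde{\Phi}(r).
\]
Thus $p'$ is admissible in the supremum defining $\tilde{p}_-$, so $\tilde{p}_- \ge p'$; letting $p \to p_+^+$ (so $p' \to p_+'$) yields $\tilde{p}_- \ge p_+'$.

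Part (2) is the mirror image. I fix $p$ with $1 < p < p_-$, so that $\Phi(\lambda t) \ge \lambda^p \Phi(t)$ for $\lambda > 1$. For $\lambda > 1$ and $r \ge 0$ the same substitution $s = \lambda^{\alpha} t$ in $\widetilde{\Phi}(\lambda r)$ is legitimate because now $\lambda^{\alpha} > 1$, and the identities $1+\alpha = \alpha p = p'$ again produce $\widetilde{\Phi}(\lambda r) \le \lambda^{p'}\widetilde{\Phi}(r)$; hence $\tilde{p}_+ \le p'$, and letting $p \to p_-^-$ yields $\tilde{p}_+ \le p_-'$. The main obstacle is spotting the scaling exponent $\alpha = 1/(p-1)$ upfront; once it is chosen, both estimates reduce to a one-line Legendre computation, and only mild care is needed at the endpoints $p_+ = 1$ (where $p_+' = \infty$) and $p_- = \infty$ (where $p_-' = 1$) to interpret the limiting inequalities.
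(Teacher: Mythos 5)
Your proof is correct and rests on the same core idea as the paper's: a scaling substitution inside the Legendre-transform supremum that converts the power-type bound on $\Phi$ into the conjugate-exponent bound on $\widetilde{\Phi}$. The paper derives $\widetilde{\Phi}(r)\le\lambda^{-p_+}\widetilde{\Phi}(\lambda^{p_+-1}r)$ and then relabels variables, while you start directly from $\widetilde{\Phi}(\mu r)$ with the substitution $s=\mu^{\alpha}t$, $\alpha=1/(p-1)$; these are the same computation reorganized. One small point in your favor: by working with $p>p_+$ (resp.\ $p<p_-$) and passing to the limit, you sidestep the question of whether the infimum defining $p_+$ is attained, which the paper's proof tacitly assumes.
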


\begin{proof}
$(1)$ We assume that $\Phi(\lambda r)\leq \lambda^{p_+}\Phi(r)$ for any $\lambda>1$ and $r>0$. From the definition of $\widetilde{\Phi}$, we have
\begin{align*}
\widetilde{\Phi}(r)
&=
\sup_{s>0}\{sr-\Phi(s)\}
\leq
\sup_{s>0}\{sr-\lambda^{-p_+}\Phi(\lambda s)\}
=
\lambda^{-p_+}\sup_{s>0}\{\lambda^{p_+}sr-\Phi(\lambda s)\}\\
&=
\lambda^{-p_+}\widetilde{\Phi}(\lambda^{p_+-1}r).
\end{align*}
When
\begin{equation*}
\Phi(\lambda r)\leq \lambda^{p_+}\Phi(r), \quad r\ge0,
\end{equation*}
by the change of variables $s=\lambda^{p_+-1}r$ and $\mu=\lambda^{1-p_+}$, we have
\[
\widetilde{\Phi}(\mu s)\leq \mu^{p'_+}\widetilde{\Phi}(s).
\]
Keeping in mind that $\lambda>1$ if and only if $0<\mu<1$, we have the desired result. \\
$(2)$ We assume that $\Phi(\lambda r)\leq \lambda^{p_-}\Phi(r)$ for any $0<\lambda< 1$ and $r>0$. So far, if
\begin{equation*}
\Phi(\lambda r)\leq \lambda^{p_-}\Phi(r),
\end{equation*}
then we have
\[
\widetilde{\Phi}(\mu s)\leq \mu^{p'_-}\widetilde{\Phi}(s),
\]
where $\lambda^{p_--1}r=s$ and $\lambda^{1-p_-}=\mu$. Since $0<\lambda<1$ if and only if $\mu>1$, we have the conclusion.
\end{proof} 

From Lemma \ref{lem:210131-1}, we have the following corollary.

\begin{lemma}\label{lem:210118-2}
Let Let $\Phi:[0,\infty]\to[0,\infty]$ be a Young function. 
\begin{itemize}
\item[{\rm (1)}]
$p_+<\infty$ if and only if $\Phi\in \Delta_2$.
\item[{\rm (2)}]
$p_->1$ if and only if $\Phi\in\nabla_2$.
\end{itemize}
\end{lemma}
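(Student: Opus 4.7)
The plan is to prove (1) directly and then obtain (2) from (1) by invoking Lemma \ref{lem:210131-1} together with the identity $\widetilde{\widetilde{\Phi}}=\Phi$ and the equivalence $\Phi\in\nabla_2\iff\widetilde{\Phi}\in\Delta_2$.

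For (1), the direction $p_+<\infty\Rightarrow\Phi\in\Delta_2$ is immediate: for any admissible $p$ in the definition of $p_+$, specializing the inequality $\Phi(\lambda r)\le\lambda^p\Phi(r)$ at $\lambda=2$ gives $\Phi(2r)\le 2^p\Phi(r)$, so $\Phi\in\Delta_2$. For the converse, assume $\Phi(2r)\le k\Phi(r)$ for some $k>1$. Convexity and $\Phi(0)=0$ force $\Phi(2r)\ge 2\Phi(r)$, so in fact $k\ge 2$; moreover $\Delta_2$ forces $a(\Phi)=0$ and $b(\Phi)=\infty$, so $\Phi>0$ on $(0,\infty)$. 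My strategy is to show that $\Phi(r)/r^{k-1}$ is non-increasing on $(0,\infty)$, which is equivalent to $\Phi(\lambda r)\le\lambda^{k-1}\Phi(r)$ for all $\lambda>1$, hence $p_+\le k-1<\infty$. To see the monotonicity I would use the right derivative $\Phi'_+$, which exists and is non-decreasing by convexity. Since $\Phi'_+$ is non-decreasing on $[r,2r]$, one has $r\Phi'_+(r)\le\int_r^{2r}\Phi'_+(s)\,{\rm d}s=\Phi(2r)-\Phi(r)\le(k-1)\Phi(r)$, whence $\Phi'_+(r)/\Phi(r)\le(k-1)/r$ for all $r>0$. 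Integrating this differential inequality from $r$ to $\lambda r$ (using absolute continuity of $\Phi$ on compact subintervals of $(0,\infty)$) yields $\log(\Phi(\lambda r)/\Phi(r))\le(k-1)\log\lambda$, i.e., $\Phi(\lambda r)\le\lambda^{k-1}\Phi(r)$, as required.

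For (2), since $\Phi\in\nabla_2\iff\widetilde{\Phi}\in\Delta_2$, part (1) applied to $\widetilde{\Phi}$ yields $\Phi\in\nabla_2\iff p_+(\widetilde{\Phi})<\infty$. It then suffices to show $p_+(\widetilde{\Phi})<\infty\iff p_-(\Phi)>1$. The forward implication follows from Lemma \ref{lem:210131-1}(1) applied to $\widetilde{\Phi}$: using $\widetilde{\widetilde{\Phi}}=\Phi$ we obtain $p_-(\Phi)\ge p_+(\widetilde{\Phi})'$, which is $>1$ whenever $p_+(\widetilde{\Phi})<\infty$. The reverse implication is Lemma \ref{lem:210131-1}(2) itself: $p_-(\Phi)>1$ forces $p_+(\widetilde{\Phi})\le p_-(\Phi)'<\infty$.

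The main obstacle I foresee lies in the technical handling of the possibly non-differentiable Young function $\Phi$ in the implication $\Phi\in\Delta_2\Rightarrow p_+<\infty$. The preliminary observation that $\Delta_2$ forces $a(\Phi)=0$ and $b(\Phi)=\infty$ ensures $\Phi>0$ on $(0,\infty)$, and passing to the right derivative $\Phi'_+$ (non-decreasing by convexity) allows the convexity-based estimate and the integration of the resulting differential inequality to be carried out without any smoothness hypothesis; one only needs to combine this with the trivial case $r=0$ by continuity.
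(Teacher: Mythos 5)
Your proposal is correct and follows essentially the same route as the paper's own proof: for (1) it specializes $\lambda=2$ in one direction and, in the other, derives a differential inequality of the form $\Phi'(r)/\Phi(r)\lesssim 1/r$ from convexity plus $\Delta_2$ and integrates it; for (2) it reduces to (1) via Lemma \ref{lem:210131-1}, $\widetilde{\widetilde{\Phi}}=\Phi$, and $\Phi\in\nabla_2\iff\widetilde{\Phi}\in\Delta_2$. Your version is a bit more careful than the paper's (you work with the right derivative $\Phi'_+$, explain why $\Delta_2$ forces $a(\Phi)=0$ and $b(\Phi)=\infty$ so that $\Phi>0$ on $(0,\infty)$, retain the sharper constant $k-1$ instead of $k$, and spell out the two implications in (2)), but these are refinements of the same argument rather than a genuinely different approach.
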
 

\begin{proof}
We assume $p_+<\infty$. Letting $\lambda=2$ in the definition of $p_+$, we have
\[
\Phi(2r)\leq 2^{p_+}\Phi(r).
\]
From $p_+<\infty$, we get $1<2^{p_+}<\infty$. Meanwhile, we assume $\Phi\in \Delta_2$. For $t>0$, we compute\\
 $t\Phi'(t)\leq \Phi(2t)\leq k\Phi(t)$, where $k>1$ is a constant appeared in the definition of the condition $\Delta_2$. Which implies 
 \[
 \frac{\Phi'(t)}{\Phi(t)}\leq k\frac{1}{t}.
 \]
 Let $\lambda>1$. Integrating both sides of the above inequality from $r$ to $\lambda r$, we have
\[
 \int_r^{\lambda r}\frac{\Phi'(t)}{\Phi(t)}dt\leq k\int_r^{\lambda r}\frac{dt}{t}
 \]
 \[
 \log\frac{\Phi(\lambda r)}{\Phi(r)}\leq k\log\frac{\lambda r}{r}
 \]
 \[
\Phi(\lambda r)\leq \lambda^k\Phi(r).
\]
 Thus, we get $p_+<k<\infty$.
Now, we turn to prove (2). From Lemma \ref{lem:210131-1} and (1), we get that the conditions $p_->1$ and $\Phi\in\nabla_2$ are equivalent.
\end{proof}

\begin{lemma}\label{lem:210403}
Let $\Phi$ be a young function and $\dfrac{1}{p_-(\Phi)}\leq\theta<\infty$. We define
\[
\Phi_{\theta}(r)=\int_0^{r^{\theta}}\frac{\Phi(t)}{t}dt.
\]
Then, $\Phi_{\theta}(r)$ is a Young function and we have
\begin{equation}\label{eq:210403-1}
\theta p_-(\Phi)\leq p_-(\Phi_{\theta})\leq p_+(\Phi_{\theta})\leq \theta p_+(\Phi).
\end{equation}
Moreover, we have $\|\cdot\|_{L^{\Phi}}\sim \|\cdot\|_{L^{\Phi_1}}$, and $\|\cdot\|_{{\rm w}L^{\Phi}}\sim \|\cdot\|_{{\rm w}L^{\Phi_1}}$.
\end{lemma}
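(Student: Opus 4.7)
My plan is to verify the three assertions in order: that $\Phi_\theta$ is a Young function, the indicial bounds (\ref{eq:210403-1}), and the norm equivalences at $\theta=1$. The key obstacle will be the convexity of $\Phi_\theta$, which is the only place where the hypothesis $\theta\ge 1/p_-(\Phi)$ is actually needed.

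For the Young function property, monotonicity and $\Phi_\theta(0)=0$ are immediate from the definition of $\Phi_\theta$ as the integral of a non-negative function. The values $a(\Phi_\theta)=a(\Phi)^{1/\theta}$ and $b(\Phi_\theta)=b(\Phi)^{1/\theta}$ can be read off directly, and the boundary conditions at $b(\Phi_\theta)$ follow from monotone convergence. For convexity on $[0,b(\Phi_\theta))$, I would change variables $t=s^\theta$ to rewrite
\[
\Phi_\theta(r)=\theta\int_0^r\frac{\Phi(s^\theta)}{s}\,{\rm d}s,
\]
and show that the integrand $s\mapsto\Phi(s^\theta)/s$ is non-decreasing. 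For $\mu>1$ this reduces to $\Phi(\mu^\theta s^\theta)\ge\mu\Phi(s^\theta)$. Since the defining set of $p_-(\Phi)$ is closed under the pointwise limit $p_n\uparrow p$, the value $p_-(\Phi)$ itself lies in it, so $\Phi(\mu^\theta s^\theta)\ge\mu^{\theta p_-(\Phi)}\Phi(s^\theta)$, and the hypothesis $\theta p_-(\Phi)\ge 1$ is exactly what is needed.

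For the indicial bounds, the change of variables $t=\lambda^\theta u$ gives
\[
\Phi_\theta(\lambda r)=\int_0^{\lambda^\theta r^\theta}\frac{\Phi(t)}{t}\,{\rm d}t=\int_0^{r^\theta}\frac{\Phi(\lambda^\theta u)}{u}\,{\rm d}u.
\]
Inserting $\Phi(\lambda^\theta u)\le\lambda^{\theta p}\Phi(u)$ (for any $p>p_+(\Phi)$ and $\lambda>1$) inside the integral yields $\Phi_\theta(\lambda r)\le\lambda^{\theta p}\Phi_\theta(r)$, and taking $p\downarrow p_+(\Phi)$ gives $p_+(\Phi_\theta)\le\theta p_+(\Phi)$. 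The analogous argument with $0<\lambda<1$ and the lower estimate gives $p_-(\Phi_\theta)\ge\theta p_-(\Phi)$; the middle inequality $p_-(\Phi_\theta)\le p_+(\Phi_\theta)$ is automatic for any Young function (combine the two one-sided estimates at the same $\lambda>1$).

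For the norm equivalences at $\theta=1$, the monotonicity of $\Phi(t)/t$ (a standard consequence of convexity and $\Phi(0)=0$) yields the pointwise sandwich $\Phi(r/2)/2\le\Phi_1(r)\le\Phi(r)$: the upper bound uses $\Phi(t)/t\le\Phi(r)/r$ on $(0,r)$, and the lower bound restricts the integral to $(r/2,r)$. From $\Phi_1\le\Phi$ one has $\|\cdot\|_{L^{\Phi_1}}\le\|\cdot\|_{L^\Phi}$, and the reverse follows by combining $\Phi(r/2)\le 2\Phi_1(r)$ with the convexity contraction $\Phi_1(r/2)\le\Phi_1(r)/2$ to absorb the factor $2$ into a dilation of the Luxemburg argument. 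The weak-norm equivalence then follows for free: by Proposition \ref{prop:weak Orlicz space} we have $\|f\|_{{\rm w}L^\Phi}=\sup_{t>0}t\|\chi_{\{|f|>t\}}\|_{L^\Phi}$, and similarly for $\Phi_1$, so the strong-norm equivalence applied to characteristic functions transfers immediately to the equivalence of the weak norms.
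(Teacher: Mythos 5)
Your proof is correct and follows essentially the same route as the paper's: differentiating $\Phi_\theta$ after the substitution $t=s^\theta$ to reduce convexity to the monotonicity of $s\mapsto\Phi(s^\theta)/s$ (where $\theta p_-\ge1$ enters), pulling the dilation inside the integral to get the indicial bounds, and sandwiching $\Phi_1$ between dilates of $\Phi$ to transfer both the Luxemburg and weak norms. The only cosmetic differences are that you verify monotonicity via $\mu>1$ while the paper uses $0<\lambda<1$, you use a slightly different pointwise sandwich for the $\theta=1$ case, and you spell out that the defining sets for $p_\pm$ actually attain their sup/inf and that $p_-\le p_+$ holds automatically — details the paper leaves implicit.
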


\begin{proof}
By the change of variables, we have
\[
\Phi_{\theta}(r)=\int_0^{r^{\theta}}\frac{\Phi(t)}{t}dt
=\int_0^r\theta\frac{\Phi(t^{\theta})}{t}dt,
\]
which means that $\theta\Phi(t^{\theta})t^{-1}$ is the primitive function of $\Phi_{\theta}$. Since $\dfrac{1}{p_-(\Phi)}\leq \theta<\infty$ and $\Phi(\lambda t)\leq \lambda^{p_-}\Phi(t)$ for any $t>0$ and any $0<\lambda<1$, we get
\[
\frac{\Phi((\lambda t)^{\theta})}{\lambda t}\leq \frac{\lambda^{\theta p_-}\Phi(t^{\theta})}{\lambda t}\leq \lambda^{\theta p_--1}\frac{\Phi(t^{\theta})}{t}\leq \frac{\Phi(t^{\theta})}{t},
\]
for any $t>0$ and any $0<\lambda<1$. Thus, we obtain the fact that the primitive function $\theta\Phi(t^{\theta})t^{-1}$ is non-decreasing, which implies $\Phi_{\theta}$ is a Young function.
We assume $\Phi(\lambda t)\leq \lambda^p\Phi(t)$ for any $t>0$
and any $0<\lambda<1$ (resp. $\lambda>1$). Then, we get
\begin{align*}
\Phi_\theta(\lambda r)
=
\int_0^{(\lambda r)^{\theta}}\frac{\Phi(t)}{t}\,{\rm d}t
=
\int_0^{r^{\theta}}\frac{\Phi(\lambda^\theta t)}{t}\,{\rm d}t
\le
\lambda^{\theta p}\int_0^{r^{\theta}}\frac{\Phi(t)}{t}\,{\rm d}t
=
\lambda^{\theta p}\Phi_{\theta}(r),
\end{align*}
for any $r>0$ and any $0<\lambda<1$ (resp. $\lambda>1$). Which concludes $(\ref{eq:210403-1})$. From $\dfrac{\Phi(t)}{t}\leq \Phi'(t)\leq \dfrac{\Phi(2t)}{t}$ for $t>0$, it is easy to show that
\[
\Phi_1(r)\leq \Phi(r)\leq \Phi_1(2r),
\]
for $r>0$. Thus, we have $\|\cdot\|_{L^{\Phi}}\sim \|\cdot\|_{L^{\Phi_1}}$ and using Proposition 2.4, we also have $\|\cdot\|_{{\rm W}L^{\Phi}}\sim\|\cdot\|_{{\rm w}L^{\Phi}}\sim\|\cdot\|_{{\rm w}L^{\Phi_1}}$.
\end{proof}
Remark that $\Phi_{\theta}(r)=\Phi_1(r^{\theta})$.

Here we start the Proof of Theorem \ref{thm:201007-1}.

(1) If $q=\infty$, by the pointwise estimate
\begin{equation*}
Mf_k(x)
\le
M\left[\sup_{j\in{\mathbb N}}|f_j|\right](x),
\quad k\in{\mathbb N},
\end{equation*}
then this is an easy consequence of the boundedness of $M$
on ${\rm w}L^\Phi({\mathbb R}^n)$ (Proposition \ref{prop:201027-1}).

(2) Let $q<\infty$.
From $\Phi\in\Delta_2\cap\nabla_2$, we have $1<p_-(\Phi)\le p_+(\Phi)<\infty$. Let $\Psi=\Phi_{\frac1\eta}$ for $\eta\in(1,p_-(\Phi))$. Then, $\Psi$ is a Young function from Lemma \ref{lem:210403}. We fix a measurable non-negative function $\varphi$ in $L^{\widetilde{\Psi},1}({\mathbb R}^n)$ such that $\|\varphi\|_{L^{\widetilde{\Psi},1}}=1$. Thus by duality and 
\[
\left|\left|\left(\sum_{j=1}^\infty Mf_j(x)^q\right)^{\frac\eta q}\right|\right|_{{\rm w}L^{\Psi}}
=
\left|\left|\left(\sum_{j=1}^\infty Mf_j(x)^q\right)^{\frac 1q}\right|\right|^{\eta}_{{\rm w}L^{\Phi_1}}
\sim
\left|\left|\left(\sum_{j=1}^\infty Mf_j(x)^q\right)^{\frac 1q}\right|\right|^{\eta}_{{\rm w}L^{\Phi}},
\] 
it suffices to show that
\begin{equation*}
\int_{{\mathbb R}^n}\left(\sum_{j=1}^\infty Mf_j(x)^q\right)^{\frac \eta q}\varphi(x)\,{\rm d}x
\lesssim
\left\|\left(\sum_{j=1}^\infty|f_j|^q\right)^{\frac1q}\right\|_{{\rm w}L^\Phi}^{\eta}.
\end{equation*}
Now, choosing $\theta$ so that
\[
0<\theta<1, \quad
\left(\frac{p_+(\Phi)}\eta\right)'\theta>1.
\]
We note that $\widetilde{\Psi}_{\theta}\in\nabla_2$. In fact, from Lemma \ref{lem:210131-1} and Lemma \ref{lem:210403}, we obtain
\[
p_-(\widetilde{\Psi}_{\theta})\geq \theta\cdot p_-(\widetilde{\Psi})
\geq \theta\cdot p'_+(\Phi_{\frac1\eta}) \geq
\theta\cdot\left(\frac{p_+(\Phi)}{\eta}\right)'>1.
\]
Consequently, we obtain
\begin{align*}
\int_{{\mathbb R}^n}\left(\sum_{j=1}^\infty Mf_j(x)^q\right)^{\frac \eta q}\varphi(x)\,{\rm d}x
&\le
\int_{{\mathbb R}^n}
\left(\sum_{j=1}^\infty Mf_j(x)^q\right)^{\frac \eta q}
M\left[\varphi^{\frac1\theta}\right](x)^\theta
\,{\rm d}x\\
&\lesssim
\int_{{\mathbb R}^n}
\left(\sum_{j=1}^\infty|f_j(x)|^q\right)^{\frac \eta q}
M\left[\varphi^{\frac1\theta}\right](x)^\theta
\,{\rm d}x\\
&\lesssim
\left\|\left(\sum_{j=1}^\infty|f_j|^q\right)^{\frac \eta q}\right\|_{{\rm w}L^\Psi}
\left\|\left(M\left[\varphi^{\frac1\theta}\right]\right)^\theta\right\|_{L^{\widetilde{\Psi},1}}\\
&\lesssim
\left\|\left(\sum_{j=1}^\infty|f_j|^q\right)^{\frac1q}\right\|_{{\rm w}L^{\Phi_1}}^\eta
\sim \left\|\left(\sum_{j=1}^\infty|f_j|^q\right)^{\frac1q}\right\|_{{\rm w}L^\Phi}^\eta,
\end{align*}
where in the second inequality we used \cite[Theorem 3.1]{AnJo80/81}, since $(M[\varphi^{1/\theta}])^\theta\in A_\eta$ (see \cite[Theorem 7.7]{Duoandikoetxea01}).

\section*{Acknowledgement}
The authors would like to thank Professor Yoshihiro Sawano for their useful comments and pointing some typos.

\end{document}